\def\centerarc[#1](#2)(#3:#4:#5)
\definecolor{blue_links}{RGB}{13,0,180} 
\newtheorem{theorem}{Theorem}[section]
\newtheorem{lemma}[theorem]{Lemma}
\newtheorem{corollary}[theorem]{Corollary}
\newtheorem{definition}[theorem]{Definition}
\newtheorem{remark}[theorem]{Remark}
\newtheorem*{theorem*}{Theorem}
\def\de{\mathrm{d}}
\newcommand{\R}{\mathbb{R}}
\newcommand\ep{\varepsilon}
\def\eps{\varepsilon}
\def\epsilon{\varepsilon}
\def\dist{\operatorname{dist}}
\def\sdist{\operatorname{sdist}}
\def\Xint#1{\mathchoice
    {\XXint\displaystyle\textstyle{#1}}
    {\XXint\textstyle\scriptstyle{#1}}
    {\XXint\scriptstyle\scriptscriptstyle{#1}}
    {\XXint\scriptscriptstyle\scriptscriptstyle{#1}}
\!\int}
\def\XXint#1#2#3{{\setbox0=\hbox{$#1{#2#3}{\int}$}
\vcenter{\hbox{$#2#3$}}\kern-.5\wd0}}
\def\dashint{\Xint-}
\newcommand\iO{\int_\Omega}
\newcommand{\wkstr}{\xrightharpoonup{*}}
\renewcommand{\div}[1]{\nabla\cdot{#1}}
\numberwithin{equation}{section}
\begin{document} 

\title[Convergence of a heterogeneous Allen--Cahn equation to weighted MCF]{Convergence of a heterogeneous Allen--Cahn equation \\
to weighted mean curvature flow}
\author[L. Ganedi]{Likhit Ganedi} 
\address[Likhit Ganedi]{Institut f\"ur Mathematik, RWTH Aachen, Templergraben 55,
52062 Aachen, Germany}
\email{ganedi@eddy.rwth-aachen.de}
\author[A. Marveggio]{Alice Marveggio}
\address[Alice Marveggio]{Hausdorff Center for Mathematics, Universit\"at Bonn, Endenicher Allee 62, 53115 Bonn,
Germany}
\email{alice.marveggio@hcm.uni-bonn.de}
\author[K. Stinson] {Kerrek Stinson} 
\address[Kerrek Stinson]{Department of Mathematics, University of Utah, Salt Lake City, UT 84112, USA}
\email{kerrek.stinson@utah.edu}

\subjclass[2010]{35A15, 53E10, 35D30, 35K57, 35A02, 74N20}
\keywords{Heterogeneous phase transitions, Gibbs-Thomson relation, sharp interface limit, Allen-Cahn equation, mean curvature flow, weak-strong uniqueness.}

\begin{abstract} 
We consider a variational model for heterogeneous phase separation, based on {a diffuse interface energy} {with moving wells}. 
{Our main result identifies} the asymptotic behavior of the first variation of {the phase field} energies as the width of the diffuse interface {vanishes}. {This convergence result allows us to deduce} a Gibbs--Thomson relation for heterogeneous surface tension{s}.
{Proceeding} from this information, we prove that (weak) solutions of the Allen–Cahn equation 
{with space dependent potential}
converge to a $BV$ solution of weighted mean curvature flow, under an energy convergence hypothesis. Additionally, relying on the relative energy technique, we establish a weak-strong uniqueness principle for {solutions of} weighted mean curvature flow.
\end{abstract}
\maketitle
\section{Introduction}
Many physical, chemical, and biological processes are driven by phase separation, wherein mixtures prefer to separate into nearly pure regions with surface-like boundaries. Extensive research has been conducted on the connection between diffuse interface models for phase separation, which are amenable to numerics, and their sharp interface analogues, where phase boundaries are represented by hypersurfaces. While this connection is relatively well understood in the homogeneous setting, realistic models may need to incorporate heterogeneities, which can naturally arise due to material properties or non-isothermal settings. Motivated by recent work for heterogeneous phase separation at the static level \cite{cristoferigravina,cristoferifonsecaganedi}, we study the sharp interface limit of a heterogeneous model for phase separation \emph{with moving wells} and rigorously derive its limiting evolution as the diffuse interface-width parameter vanishes. 

We begin by introducing the underlying free energy for the variational model. On a material domain $\Omega\subset \R^N$ with $C^2$-boundary, we consider the energy
\begin{equation}\label{eq:energyIntro}
E_\eps[u] : = \int_{\Omega}\frac{1}{\eps}W(x,u) + \frac{\eps}{2}|\nabla u|^2 \, \de x,
\end{equation}
where $\eps>0$ is a parameter proportional to interface width, $u\in H^1(\Omega)$ is a phase-field function, and $W : \Omega\times \R \to [0,\infty)$ is a double-well function satisfying $W(x,u) = 0$ if and only if $u = a(x)$ or $u = b(x)$ for $a,b \in C^2(\overline{\Omega})$ with $a<b$.
As in Bouchitt\'e's work \cite{Bouchitte}, one can see $W(x,u)$ as an alternative to considering a double-well function of the form $\bar W(T,u)$, where $T:\Omega\to \R$ is a spatially dependent temperature-field.
 The {($\eps$-rescaled)} $L^2$-gradient flow of this energy is the \emph{heterogeneous Allen--Cahn equation}
\begin{equation}\label{eqn:AChetero}
\left\{\begin{aligned}
	\partial_t u_\varepsilon & =  \Delta u_\varepsilon - \frac{1}{\eps^2} \partial_{u} W(x, u_\eps) \quad && \text{ in } \Omega \times [0,\infty), \\
	\nabla u_{\eps}\cdot n_\Omega  & = 0 \quad  && \text{ on } \partial \Omega \times [0,\infty).
	\end{aligned} \right.
\end{equation}
At the static level, the derivation of the limit of \eqref{eq:energyIntro} as $\eps\to 0$ is due to Bouchitt\'e \cite{Bouchitte}, who studied the $\Gamma$-limit of functionals of the form
$\int_{\Omega}\frac{1}{\eps}f(x,u,\eps \nabla u)\, \de x,$
and, in particular, showed that the $\Gamma$-limit of $E_\eps$ is the weighted perimeter functional
\begin{equation}\label{eqn:energyLimitIntro}
E[ u ] : = \begin{cases}\int_{\Omega} \sigma \, \de |\nabla \chi_A| & \text{ for } u  = b\chi_A + a(1-\chi_A) \text{ with }A\subset 
\Omega, \\
+\infty & \text{ otherwise},
\end{cases}
\end{equation}
where $$\sigma (x) : = \int_{a(x)}^{b(x)}\sqrt{2W(x,s)}\, \de s$$ and we refer the reader to Subsection \ref{subsec:notation} for our notation.
We then expect that solutions of the heterogeneous Allen--Cahn equation \eqref{eqn:AChetero} will converge to a weighted mean curvature flow governed by the spatially dependent surface tension $\sigma.$
More precisely, we say the sets $t\mapsto A(t)\subset \Omega$ evolve by \emph{weighted mean curvature flow} if they satisfy
\begin{equation}\label{eqn:wMCFstrong}
\left\{\begin{aligned}
	\sigma V &= \sigma H_A - \nabla \sigma \cdot n_{A} \quad &&\text{ along } \partial A\cap \Omega,\\
	\partial A \angle \partial \Omega & = 90^\circ \quad &&\text{ on } \partial \Omega \cap \overline{\partial A \cap \Omega},
	\end{aligned}\right.
\end{equation}
where  $V$ is the surface normal velocity (in the direction $n_A$), $H_A = -\div{n_{A}}$ denotes the scalar mean curvature, and $n_A$ is (a smooth extension of) the inner normal on $\partial A$. We remark that this evolution can be found as the formal gradient flow of the weighted perimeter functional \eqref{eqn:energyLimitIntro} with respect to the ${L^2(\Omega;\sigma\mathcal{H}^{N-1}\llcorner {\partial A})}$ metric. 

Introducing a {distributional (or $BV$)} solution concept for the weighted mean curvature flow \eqref{eqn:wMCFstrong}, we prove that weak solutions of the heterogeneous Allen--Cahn equation \eqref{eqn:AChetero} converge to a solution of \eqref{eqn:wMCFstrong}  under a suitable energy convergence hypothesis (see Theorem \ref{theo:convAC}). 
Following Luckhaus and Sturzenhecker's work \cite{LuckhausStur}, as also in \cite{HenselLaux-contact,kroemer_Laux_2021,LauxSimon,kimMelletWu2022}, we rely on the assumption that the time-integrated energies of the phase-field approximation \eqref{eq:energyIntro} converge to the weighted perimeter as $\eps\rightarrow 0$. 
 To obtain convergence of solutions, we first show that, in a certain sense, the first variation of the diffuse energy $E_\eps$ converges to the first variation of the weighted perimeter functional $E$ (see Theorem \ref{theorem:firstvar}). To derive this limit, we introduce a normalized well function 
$$W_{\rm n}(x,v) : = W(x,a(x) + \gamma(x) v), \quad \text{ with }\gamma = b - a, $$
which critically allows us to reduce the analysis to the case of fixed wells---though the energy landscape still depends on $x$---wherein we may take advantage of a differential inequality $|\partial_x \sqrt{W_{\rm n}(x,v)}|\leq C \sqrt{W_{\rm n} (x,v)}$ (see \eqref{eqn:derivative control} below). While we must explicitly assume this inequality, it is satisfied by prototypical double-well functions and may be thought of as a slight strengthening of a Lipschitz assumption for $\sqrt{W}$, which can be used to compare $\sigma$ at neighboring points.

As a direct consequence of the convergence of first variations, we may derive a Gibbs--Thomson relation in the heterogeneous setting (see \Cref{cor:GT}). Simply stated, considering minimizers $u_\eps$ of $E_\eps$ subject to a mass constraint, one finds that 
\begin{equation}\nonumber
\lambda_\eps  = \eps \Delta u_\eps - \frac{1}{\eps}\partial_u W(x,u) \quad \text{ in } \Omega,
\end{equation}
where $\lambda_\eps\in \R$ is a Lagrange multiplier accounting for the mass-constraint or, in more physical terms, is the thermodynamic chemical potential of the system. The Gibbs--Thomson relation says that, for the limiting region $A$ determined by $u_\eps \to b\chi_A + a(1-\chi_A)$ as $\eps\to 0$, we have
$$\lambda_0 : = \lim_{\eps \to 0} \lambda_\eps =  \frac{1}{\gamma} {\left(-\sigma H_{A} + \nabla \sigma \cdot n_A\right)} \quad \text{ on $\partial A\cap \Omega$}.$$
In particular, the weighted mean curvature of the limit interface is prescribed by the limit of chemical potentials.
We remark that in the homogeneous setting the Gibbs--Thomson relation was conjectured by Gurtin in \cite{GurtinConjecture} and then proven by Luckhaus and Modica in \cite{LuckhausModica}. Incorporating anisotropy and spatial heterogeneity in the gradient term, Cicalese et al. \cite{CicaleseNagasePisante} proved the relation for energies of the form $\int_{\Omega}\frac{1}{\eps}W(u) + \eps g(x,\nabla u) \de x$.

Furthermore, we investigate the uniqueness properties of distributional (or $BV$) solutions to weighted mean curvature flow. 
We show that this solution concept satisfies a weak-strong uniqueness property, meaning that: 
As long as a classical smooth solution to the weighted mean curvature flow exists, any $BV$-solution with the same initial datum must coincide with this flow (see Theorem \ref{theo:weakstrong}). 
Similar results for the (standard) mean curvature flow have recently been established by means of the relative energy technique \cite{FischerHenselLauxSimon,HenselLaux-bubble,HenselLaux-contact}. This technique is quite versatile, and we refer to \cite{Laux-volume} for the volume preserving case, to \cite{LauxStinsonUllrich22} for the anisotropic case, and to \cite{FischerHenselMarveggioMoser} for the analysis of flow beyond a circular singularity. In our setting, we adopt the same strategy and develop the relative energy method for weighted mean curvature flow. 

We make a couple brief remarks situating our result in the context of current research. Several results on the convergence of the (standard) homogeneous Allen-Cahn equation to mean curvature flow are available in the literature, ranging from formal asymptotic analysis to rigorous results for a variety of solution concepts. For instance, for the convergence to a smooth surface evolution, we refer to the works \cite{DeMottoniSchatzman,Chen} and, more recently, to \cite{FischerLauxSimon,FischerMarveggio}. 
To understand the long-term behavior past singularities, the following concepts of weak solutions have proven to be useful for understanding the singular limit: viscosity
solutions \cite{ChenGigaGoto,EvansSpruck,EvansSonerSouganidis}, Brakke’s varifold solutions \cite{Brakke,Ilmanen}, and, more recently, BV-varifold solutions \cite{KimTonegawa,StuvardTonegawa} and De Giorgi's varifold solutions \cite{HenselLaux-varifold}. Two recent results \cite{BungertLauxStinson,ChambolleDeGennaroMorini} have recovered weighted mean curvature flows, as in our paper, via minimizing movements type schemes.

 Much of the drive to understand heterogeneities in phase separation processes has revolved around homogenization. Interesting results by the first author and collaborators Cristoferi and Fonseca have derived a variety of $\Gamma$-limits for functionals of the form $\int_\Omega \frac{1}{\eps}W(x/\delta,u) + \eps |\nabla u|^2 \de x$ with $\delta\to 0$ \cite{cristoferi_homogenization_2019,cristoferifonsecaganedi,cristoferi2023homogenization}. Related work by Morfe and collaborators look at a variety of Allen--Cahn equations in the setting of homogenization, developing viscosity solution theory for the limit behavior of the related curvature flows \cite{Morfe2020_gamma,Morfe2020HomogenizationOT,FeldmanMorfe2023}. While we do not broach the difficult subject of homogenization in this work, we note that in contrast to the work of Morfe, we adopt an entirely energetic approach (as in \cite{HenselLaux-contact,HenselLaux-varifold}) that has the hope of accounting for multiple phases (cf. \cite{LauxSimon}). The $\Gamma$-limit of the multiphase (or vectorial) analogue of $E_\eps$ has been identified in \cite{cristoferigravina} and, in further, generality by \cite{cristoferifonsecaganedi}. While beyond the scope of this paper, identification of the multiphase dynamics will be a subject of future investigation.

\subsection*{Overview} The rest of the paper is organized as follows. We present the precise mathematical setting for our results in Section \ref{sec:assumptions}, which includes a variety of assumptions used throughout the paper. Convergence of the first variation of the energies is proven in Section \ref{sec:firstVar}, along with the subsequent Gibbs--Thomson relation.
In \Cref{sec:convAC}, we prove that {weak} solutions of the heterogeneous Allen--Cahn equation converge to a {distributional (or BV)} solution of weighted mean curvature flow. Finally, in \Cref{sec:weakStrong}, we prove that the limiting flow {satisfies} a weak-strong uniqueness principle.


\section{Mathematical setting}\label{sec:assumptions}
We briefly introduce notation used throughout the paper and then turn to the mathematical setting for our results.

\subsection{Notation} \label{subsec:notation}
{We denote by $N$ an integer number larger than $2$.} We use $\mathcal{L}^N$ to denote the Lebesgue measure. For a measure $\mu$, $f\mu$ is the measure absolutely continuous with respect to $\mu$ having density $f.$ We use $C>0$ for a generic constant which may change from line to line. We reserve $\Omega\subset \R^N$ as our domain of interest, and it will typically be a $C^2$-domain, meaning it is an open and bounded set with $C^2$-boundary. {We adopt the symbol $^\ast$ to denote dual spaces (e.g., $C(\overline \Omega)^\ast $).} The function $\chi_A$ is the characteristic associated to the set $A$. We say $A$ is a set of finite perimeter in $\Omega$ if $\chi_A\in BV(\Omega;\{0,1\}),$ and we denote its distributional derivative in $\Omega$ by $\nabla \chi_A$ and its total variation measure by $|\nabla \chi_A|$. We use $n_A:=\frac{\de \nabla \chi_A}{\de |\nabla \chi_A|}$ for the measure-theoretic inner normal of $A$. Using $\partial^\ast A$ as the reduced boundary of the set $A$ in $\Omega$, we have $\int_\Omega \cdot \, \de |\nabla \chi_A| =  \int_{\partial^\ast A} \cdot \, \de \mathcal{H}^{N-1}$, where $\mathcal{H}^{N-1}$ is the Hausdorff measure. For more information on functions of bounded variation, we refer to \cite{AmbrosioFuscoPallara}. 
Finally, we reserve the symbol $\partial_x$ to mean differentiation with respect to the $x$-input of a function, i.e., for $W:\Omega\times \R \to [0,\infty)$ and a function $u:\Omega\to \R$, we have $\partial_xW(x,u(x)) = (\partial_x W(x,\zeta))\circ (x,u(x))$.

\subsection{Setting}\label{subsec:setting}{For $N \geq 2$, $\Omega\subset \R^N$ a $C^2$-domain, and $\eps>0$,} we consider a heterogeneous Cahn--Hilliard (or Modica-Mortola) energy of the form
\begin{equation}\label{eqn:freeEnergy}
E_\eps [u] : = \begin{cases}
\int_{\Omega} \left(\frac{1}{\eps}W(x,u) + \frac{\eps}{2}|\nabla u|^2 \right) \de x & \text{ if } u\in H^1(\Omega),\\
+\infty & \text{ otherwise,}
\end{cases}
\end{equation}
where $W\in C^{{2}}(\Omega \times \R;[0,\infty))$ is a double-well function \emph{with moving wells}, namely satisfying
$$W(x,u) = 0  \text{ if and only if } u \in \{a(x),b(x)\}.$$
Here, we {suppose} $a,b \in C^2(\overline{\Omega})$ with $\inf\{b(x) - a(x): x\in \Omega\}=:\delta_{\rm sep} > 0$. Further, we {assume} that $W$ has quadratic growth near the wells, {in the sense that there are constants $C_1,C_2>0$ with}
\begin{equation}\label{eqn:quadratic growth}
C_1 \min\{|u -a(x)|, |u - b(x)|\}^2 \leq W(x,u) \leq C_2 \min\{|u -a(x)|, |u - b(x)|\}^2
\end{equation}
for all $u$ with $\min\{|u-a(x)|,|u-b(x)|\}<1,$ 
and {is $L^2$-coercive}, {meaning that there is $C>0$ such that}
\begin{equation}\label{eqn:L2 growth}
W(x,u) \geq \frac{1}{C}|u|^{{2}} - C.
\end{equation}
As remarked in the introduction (see \cite{Bouchitte}), the energies $E_\eps$ $\Gamma$-converge to the weighted perimeter functional
\begin{equation}\label{eqn:limitEnergy}
E[u] : = \begin{cases}
\int_{\Omega} \sigma \, \de |\nabla \chi_A|  & \text{ if }u = b\chi_A + a(1-\chi_A) \in BV(\Omega), \\
+\infty & \text{otherwise},
\end{cases}
\end{equation}
where, {due to $\delta_{\rm sep}>0$, $A$ is a set of finite perimeter in $\Omega$} and the surface energy density (or surface tension) is given by 
\begin{equation} \label{eq:surftenx}
    \sigma(x) : = \int_{a(x)}^{b(x)}\sqrt{2W(x,s)}\, \de s.
\end{equation}
{We remark that the main difference with respect to the (standard) result proven independently by Modica \cite{modica87} and Sternberg \cite{Sternberg1988} (see also \cite{ModicaMortola}) is that the surface tension is heterogeneous, rather than a constant, due to the spatially dependent potential.} 
 In the case {that} we wish to localize the energies, we will specify the domain $\Omega$ in the above energies by writing $E_\eps [u;\Omega]$ or $E[u;\Omega]$.

We will often make use of a normalized version of $W$, denoted by $ W_{\rm n}$, which adjusts the wells to be $0$ and $1$, while still having a spatially dependent energy landscape.
The \emph{normalized {double}-well function} is given by
$$W_{\rm n}(x,v) : = W(x, a(x) + \gamma (x)v),$$
where $$\gamma(x) : = b(x) - a(x) \geq \delta_{\rm sep}.$$
The function $W_{\rm n}$ gives rise to a normalized surface energy density defined as
\begin{equation}\label{eqn:sigman}
\sigma_{\rm n}(x) := \int_0^1 \sqrt{2 W_{\rm n}(x,s)}\, ds = \frac{\sigma(x)}{\gamma(x)}. 
\end{equation}
Using the quadratic growth \eqref{eqn:quadratic growth} and $\delta_{\rm sep}>0$, it is direct to show that there is $C>0$ such that $$1/C\leq \sigma(x) \leq C \quad \text{ for all $x\in \Omega$},$$ which also implies that $\sigma_{\rm n}$ is non-degenerate. 
We further suppose {that $\partial_x \sqrt{W_{\rm n}(x,v)}$ and $\partial_x^2 \sqrt{W_{\rm n}(x,v)}$ belong to $C(\overline{\Omega}\times \R; \R^{N})$ and $C(\overline{\Omega}\times \R; \R^{N\times N})$,} respectively, and that the potential satisfies the quantitative control
\begin{equation}\label{eqn:derivative control}
|\partial_x \sqrt{W_{\rm n}(x,v)} |\leq C\sqrt{W_{\rm n}(x,v)},
\end{equation}
for a constant $C>0$.

\begin{remark}[Control of the derivative]{\normalfont
To motivate the assumption (\ref{eqn:derivative control}), we note that if $W$ is quadratic near the wells---meaning, $C_1=C_2$ in (\ref{eqn:quadratic growth}) for $\min\{|u - a(x)|,|u - b(x)|\} \ll 1$---and has good growth at infinity, then this assumption is satisfied. {For example, we can verify bound \eqref{eqn:derivative control}, by noting that for $v$ near $0$ the normalized well is given by $\sqrt{W_{\rm n}(x,v)} = C_1\gamma |v|$.
Similarly,} one can directly verify that (\ref{eqn:derivative control}) holds for the canonical example $W(x,u) : = |u - a(x)|^2|u - b(x)|^2$.

}
\end{remark}

\begin{remark} {\normalfont
It is important to note that \eqref{eqn:derivative control} holds only for $W_{\rm n}$ and cannot hold for the function $W$ in the case of moving wells. Indeed, the Gronwall's inequality would imply that
$$W(x,a(y))\leq CW(y,a(y)) = 0$$ for some $C>0$ and $x,y\in \Omega$ such that the line between them is contained in $\Omega$, and hence the two wells of $W$ would be constant in each connected component of $\Omega$.}
\end{remark}

\section{Convergence of first variations and Gibbs--Thomson relation} \label{sec:firstVar}

{Recall that the Gibbs--Thomson relation for a homogeneous and isotropic two-phase system states that the chemical potential (i.e., the first variation of the diffuse interfacial energy) is proportional to the curvature of the interface between the phases (cf. \cite{GurtinConjecture}). In the heterogeneous setting, we derive the analogous result by first showing that  the chemical potential of the diffuse energy converges, as $\eps\to 0$, to the first variation of the weighted perimeter of the limit interface generally. Reinterpreting this result for mass-constrained minimizers of the {interfacial} energy \eqref{eqn:freeEnergy},
we recover a Gibbs--Thomson relation generalizing the result of \cite{LuckhausModica} to the heterogeneous energy \eqref{eqn:freeEnergy}.}

\begin{theorem}[Convergence of first variations]\label{theorem:firstvar}
{Let $\Omega\subset \R^N$ be a $C^2$-domain,} and let $u_\epsilon\in H^2(\Omega)$
{satisfy} $u_\epsilon \to u_A := {b\chi_A + a(1-\chi_A)} \in BV(\Omega)$ in $L^1(\Omega)$ {and} $E_\eps[u_\eps] \to E[u_A]$ {as $\eps\to 0$}. Define $v_\eps:=\frac{u_\eps-a}{\gamma}$. {Then for all $\Psi \in C^1(\overline{\Omega};\R^N)$ with $\Psi \cdot n_{ \Omega} = 0$ on $\partial \Omega$,} we have
\begin{equation}\label{eqn:GTthm}
\begin{aligned}
\nabla E_\epsilon [u_\eps](\gamma \nabla v_\eps \cdot \Psi) := &\int_{\Omega} \left(\frac{1}{\eps}\partial_{u} W(x,u_\epsilon) (\gamma \nabla v_\eps \cdot \Psi ) + \eps \nabla u_\eps \cdot \nabla(\gamma \nabla v_\eps \cdot \Psi) \right) \de x \\ 
& \to  \delta_{{(-\Psi)}} E[u_A] := -\int_{\Omega} \sigma ({\rm Id} - n_A\otimes n_A):\nabla \Psi \, \de |\nabla \chi_A|-\iO \nabla \sigma \cdot \Psi \, \de |\nabla \chi_A|
\end{aligned}
\end{equation}
as $\eps \to 0$, where 
$n_A$ is the inner normal to $A.$
\end{theorem}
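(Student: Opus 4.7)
The plan is to work in the normalized variables $v_\eps := (u_\eps-a)/\gamma$ and $W_{\rm n}(x,v) = W(x,a+\gamma v)$, exploiting the identity $\gamma\,\partial_u W(x,u_\eps) = \partial_v W_{\rm n}(x,v_\eps)$ to reduce the first variation to integrals involving only $v_\eps$ and $W_{\rm n}$. The normalization is crucial because assumption \eqref{eqn:derivative control} can only be formulated for $W_{\rm n}$. From the hypothesis $E_\eps[u_\eps]\to E[u_A]$, standard Modica-style arguments yield asymptotic equipartition: both densities $\eps^{-1}W_{\rm n}(\cdot,v_\eps)$ and $\tfrac{1}{2}\eps\gamma^2|\nabla v_\eps|^2$ converge weakly-$\ast$ to $\tfrac{1}{2}\sigma\,|\nabla\chi_A|$ as Radon measures. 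The $O(\eps^{1/2})$ discrepancy between $\gamma\nabla v_\eps$ and $\nabla u_\eps = \gamma\nabla v_\eps + \nabla a + v_\eps\nabla\gamma$ contributes only at subleading order when tested against bounded quantities, by the bound $\eps\|\nabla v_\eps\|_{L^1}\leq C\eps^{1/2}$.

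The next and key step identifies the direction of $\nabla v_\eps$. Setting $\Phi_{\rm n}(x,v):=\int_0^v\sqrt{2W_{\rm n}(x,s)}\,\de s$ so that $\Phi_{\rm n}(x,0)=0$, $\Phi_{\rm n}(x,1)=\sigma_{\rm n}(x)$, the composition $\Phi_{\rm n}(\cdot,v_\eps)$ converges to $\sigma_{\rm n}\chi_A$ in $L^1$ with uniformly bounded $BV$ norm (its gradient is controlled by the energy density together with $(\partial_x\Phi_{\rm n})(\cdot,v_\eps)$, which is pointwise bounded via \eqref{eqn:derivative control}). The chain rule
\begin{equation*}
\nabla\bigl[\Phi_{\rm n}(\cdot,v_\eps)\bigr] = \sqrt{2W_{\rm n}(\cdot,v_\eps)}\,\nabla v_\eps + (\partial_x\Phi_{\rm n})(\cdot,v_\eps),
\end{equation*}
combined with dominated convergence of the second summand to $\chi_A\nabla\sigma_{\rm n}$, then yields
\begin{equation*}
\sqrt{2W_{\rm n}(\cdot,v_\eps)}\,\nabla v_\eps \weaklystar \sigma_{\rm n}\,n_A\,|\nabla\chi_A|
\end{equation*}
as vector-valued Radon measures. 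Combining this with equipartition upgrades to the tensor convergence $\eps\gamma^2\,\nabla v_\eps\otimes\nabla v_\eps \weaklystar \sigma\, n_A\otimes n_A\,|\nabla\chi_A|$.

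With these convergences in hand, I pass to the limit in each piece of $I_\eps$ via integration by parts. For the well part, $\partial_v W_{\rm n}\nabla v_\eps = \nabla[W_{\rm n}(\cdot,v_\eps)] - \partial_x W_{\rm n}(\cdot,v_\eps)$ together with integration by parts (the boundary term vanishes since $\Psi\cdot n_\Omega=0$) produces
\begin{equation*}
-\int_\Omega \frac{W_{\rm n}(\cdot,v_\eps)}{\eps}\,\nabla\cdot\Psi\,\de x - \int_\Omega \frac{\partial_x W_{\rm n}(\cdot,v_\eps)}{\eps}\cdot\Psi\,\de x.
\end{equation*}
For the gradient part, expanding $\nabla(\gamma\nabla v_\eps\cdot\Psi)$ by Leibniz and integrating by parts the Hessian contribution reduces everything to the tensor convergences above, plus $O(\eps^{1/2})$ corrections involving $\nabla a$ and $\nabla\gamma$. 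The principal obstacle is the residual $\int\eps^{-1}\partial_x W_{\rm n}(\cdot,v_\eps)\cdot\Psi\,\de x$: writing $\partial_x W_{\rm n}=2\sqrt{W_{\rm n}}\,\partial_x\sqrt{W_{\rm n}}$, assumption \eqref{eqn:derivative control} makes the integrand a bounded Radon measure but does not directly identify its limit, since $\eps^{-1}\sqrt{W_{\rm n}}$ alone is unbounded. I expect to identify the limit as $\int\nabla\sigma_{\rm n}\cdot\Psi\,\de|\nabla\chi_A|$ by a Reshetnyak-type continuity argument applied to $\sqrt{2W_{\rm n}(\cdot,v_\eps)}\nabla v_\eps$ (whose total mass also converges thanks to equipartition), or equivalently by a one-dimensional profile calculation along the normal to $\partial A$ that reduces to the identity $\int_0^1\partial_x\sqrt{2W_{\rm n}(x_0,s)}\,\de s = \nabla\sigma_{\rm n}(x_0)$. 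Assembling all contributions and using $\sigma=\gamma\sigma_{\rm n}$ together with $\nabla\sigma = \gamma\nabla\sigma_{\rm n} + \sigma_{\rm n}\nabla\gamma$ produces, after cancellations between well and gradient contributions, the target expression $\delta_{(-\Psi)}E[u_A]$.
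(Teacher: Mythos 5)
Your proposal follows the same overall blueprint as the paper: pass to the normalized variables $v_\eps$, $W_{\rm n}$, invoke equipartition of energy, use the function $\Phi_{\rm n}(x,v)=\int_0^v\sqrt{2W_{\rm n}(x,s)}\,\de s$ (the paper's $d_{\rm n}$) to identify $\sqrt{2W_{\rm n}(\cdot,v_\eps)}\nabla v_\eps\wkstr\sigma_{\rm n}n_A|\nabla\chi_A|$, and handle the divergence-structure and tensor terms by integration by parts and Reshetnyak. All of this is correct and matches the paper's Steps 1 and 3.

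However, there is a genuine gap precisely where you flag the ``principal obstacle,'' namely the limit of
\begin{equation*}
\int_\Omega \frac{1}{\eps}\,\partial_x W_{\rm n}(\cdot,v_\eps)\cdot\Psi\,\de x
\;=\;\int_\Omega \partial_x\sqrt{2W_{\rm n}}(\cdot,v_\eps)\,|\gamma\nabla v_\eps|\cdot\Psi\,\de x + o(1).
\end{equation*}
Neither of your two proposed resolutions closes this step. A Reshetnyak continuity argument applied to the vector measure $\sqrt{2W_{\rm n}(\cdot,v_\eps)}\gamma\nabla v_\eps\,\mathcal{L}^N$ cannot see the factor $\partial_x\sqrt{2W_{\rm n}}(\cdot,v_\eps)$: this coefficient depends on the \emph{value} $v_\eps(x)$ itself, not on the direction $n_\eps$, and it is not a $1$-homogeneous function of the underlying measure, so the Reshetnyak theorem does not apply. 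The ``one-dimensional profile calculation along the normal'' is the right heuristic but is not rigorous without a blow-up or rectifiability argument that your outline does not provide. The paper resolves this with a two-stage device. First, it establishes weak-$\ast$ convergence of the \emph{full matrix} measure $\mu_\eps:=\partial_x\sqrt{2W_{\rm n}}(\cdot,v_\eps)\otimes\nabla v_\eps\,\mathcal{L}^N \wkstr \nabla\sigma_{\rm n}\otimes n_A\,|\nabla\chi_A|$ by noting $\partial_x\sqrt{2W_{\rm n}}(\cdot,v_\eps)\otimes\nabla v_\eps = \nabla\bigl[\partial_x d_{\rm n}(\cdot,v_\eps)\bigr] - \partial_x^2 d_{\rm n}(\cdot,v_\eps)$ and integrating by parts against $\Phi\in C_c$; this reduces to $L^1$ convergence of $v_\eps$ and needs no gradient information. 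Second, it ``freezes the normal'': writing the desired quantity as $\int\Psi\otimes n_\eps:\de\mu_\eps$, it replaces $n_\eps$ by a smooth $\eta\approx n_A$, bounds the error via Cauchy--Schwarz and the convergence of $\int\psi|n_\eps-\eta|^2\sqrt{2W_{\rm n}(\cdot,v_\eps)}|\nabla v_\eps|\,\de x \to \int\psi|n_A-\eta|^2\sigma_{\rm n}\,\de|\nabla\chi_A|$, and then lets $\eta\to n_A$ in $L^2(|\nabla\chi_A|)$. Without an argument of this type, your outline does not identify the limit of the residual term, and the proof is incomplete at exactly the step that distinguishes the heterogeneous from the homogeneous case.

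One secondary remark: your claim that $\eps\gamma^2\nabla v_\eps\otimes\nabla v_\eps\wkstr\sigma\,n_A\otimes n_A|\nabla\chi_A|$ also needs an argument identifying the normal direction (it does not follow from equipartition alone); the paper gets this from Reshetnyak continuity applied to $\nu_\eps=\sqrt{W_{\rm n}(\cdot,v_\eps)}\,\gamma\nabla v_\eps\,\mathcal{L}^N$, whose total variation converges. You should make this explicit rather than saying equipartition ``upgrades'' to tensor convergence.
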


We observe that the first term in \eqref{eqn:GTthm} corresponds to the {weak} formulation of the mean curvature of the reduced boundary $\partial^\ast A$. {In the case of minimizers, one expects more regularity for the limit interface and we may use the above result to connect the limit of chemical potentials to the curvature of the interface.}

\begin{corollary}[Gibbs--Thomson relation]\label{cor:GT}
 Let $u_\epsilon\in H^2(\Omega)$ minimize $E_\eps$ \eqref{eqn:freeEnergy} subject to the constraint $\dashint_\Omega u_\eps \, \de x = m$ with $m\in {(\dashint_\Omega a\,\de x,\dashint_\Omega b\,\de x)}$, so that \begin{equation}\label{eqn:EulerLagrangeWithConstraint}
 \lambda_\eps = \eps \Delta u_\eps - \frac1\eps\partial_u W(x, u_\eps) \quad \text{ for }x\in \Omega,
 \end{equation}where $
 \lambda_\eps\in \R$ is the Lagrange multiplier for the mass-constraint. \\ Suppose further that $u_\epsilon \to u_A := b\chi_A + a(1-\chi_A) \in BV(\Omega)$ in $L^1(\Omega)$, where  $A\subset \Omega$ has $C^2$-boundary.
 Then
 \begin{equation}\nonumber
\lim_{\eps \rightarrow 0} {\lambda_\eps} = {\frac{1}{\gamma}}{\div{(\sigma n_A)}}= {\frac{1}{\gamma}}{\left(-\sigma H_{A} + \nabla \sigma \cdot n_A\right)},
 \end{equation}
 where $H_A= - {\nabla} \cdot n_A$ is the mean curvature of $\partial A $. 
\end{corollary}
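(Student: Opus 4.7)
The plan is to test the Euler--Lagrange equation \eqref{eqn:EulerLagrangeWithConstraint} against the distinguished variation $\phi_\eps := \gamma \nabla v_\eps \cdot \Psi$ featured in Theorem \ref{theorem:firstvar} and then pass to the limit $\eps \to 0$. For a fixed test field $\Psi \in C^1(\overline\Omega;\R^N)$ with $\Psi \cdot n_\Omega = 0$ on $\partial \Omega$, I would multiply \eqref{eqn:EulerLagrangeWithConstraint} by $\phi_\eps$, integrate over $\Omega$, and integrate by parts the Laplacian term; the resulting boundary contribution proportional to $\nabla u_\eps \cdot n_\Omega$ vanishes by the natural Neumann condition enjoyed by the $H^2$-minimizer $u_\eps$. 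This leaves the identity
$$
-\lambda_\eps \int_\Omega \gamma \nabla v_\eps \cdot \Psi \,\de x \,=\, \nabla E_\eps[u_\eps](\phi_\eps),
$$
whose right-hand side converges to $\delta_{(-\Psi)} E[u_A]$ by Theorem \ref{theorem:firstvar}.

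To compute the limit on the left, I would use the algebraic relation $\gamma \nabla v_\eps = \nabla u_\eps - \nabla a - v_\eps \nabla \gamma$, integrate by parts against $\Psi$ (again absorbing boundary terms with $\Psi \cdot n_\Omega = 0$), and pass to the limit using the $L^1$ convergences $u_\eps \to u_A = a + \gamma \chi_A$ and $v_\eps \to \chi_A$; the BV Gauss--Green formula then cancels the $\nabla a$ and $\nabla \gamma$ contributions and collapses the limit to $\int_{\partial^\ast A} \gamma (\Psi \cdot n_A) \,\de \hno$. Since $\gamma \geq \delta_{\rm sep} > 0$ and $\partial A \cap \Omega$ is a nonempty $C^2$-hypersurface, one may choose $\Psi$ compactly supported in $\Omega$ and (near $\partial A$) parallel to $n_A$ so that this limit is strictly positive; the convergence of the right-hand side then forces $\lambda_\eps \to \lambda_0$ for some $\lambda_0 \in \R$ independent of $\Psi$.

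Passing to the limit in the identity yields, for every admissible $\Psi$,
$$
\lambda_0 \int_{\partial^\ast A} \gamma (\Psi \cdot n_A)\,\de\hno \,=\, \int_{\partial^\ast A}\sigma (\Id - n_A \otimes n_A):\nabla \Psi\,\de\hno + \int_{\partial^\ast A} \nabla \sigma \cdot \Psi\,\de\hno,
$$
and the $C^2$-regularity of $\partial A$ then lets me invoke the surface divergence theorem. Rewriting $(\Id - n_A \otimes n_A):\nabla \Psi = \operatorname{div}_\tau \Psi$ and using $\operatorname{div}_\tau n_A = -H_A$ together with the splitting $\nabla \sigma - \nabla_\tau \sigma = (\nabla \sigma \cdot n_A) n_A$, the right-hand side simplifies to $\int_{\partial A} (-\sigma H_A + \nabla \sigma \cdot n_A)(\Psi \cdot n_A)\,\de \hno$. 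Letting $\Psi \cdot n_A$ range over arbitrary smooth functions on patches of $\partial A \cap \Omega$ produces the pointwise identity $\gamma \lambda_0 = -\sigma H_A + \nabla \sigma \cdot n_A = \div{(\sigma n_A)}$. The principal delicacy is to localize $\Psi$ compactly in $\Omega$ so as to simultaneously kill (i) the Neumann boundary term on $\partial \Omega$ in the initial integration by parts and (ii) the boundary contribution at $\partial A \cap \partial \Omega$ arising in the tangential integration by parts on $\partial A$, while retaining enough freedom in $\Psi$ to recover the pointwise Gibbs--Thomson identity $\mathcal{H}^{N-1}$-a.e. on $\partial A \cap \Omega$.
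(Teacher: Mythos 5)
Your proposal is correct and follows essentially the same route as the paper's own proof: test the Euler--Lagrange equation with the distinguished variation $\gamma\nabla v_\eps\cdot\Psi$, invoke Theorem \ref{theorem:firstvar} for the right-hand side, pass to the limit on the left-hand side via $v_\eps\to\chi_A$, deduce convergence of $\lambda_\eps$ by choosing one $\Psi$ with nonvanishing limit pairing, and finally use the surface divergence theorem on the $C^2$-boundary to extract the pointwise identity. The only cosmetic differences are that the paper computes the left-hand side limit by the more direct integration by parts $\int_\Omega\gamma\nabla v_\eps\cdot\Psi\,\de x=-\int_\Omega\div(\gamma\Psi)\,v_\eps\,\de x$ rather than via the decomposition $\gamma\nabla v_\eps=\nabla u_\eps-\nabla a-v_\eps\nabla\gamma$, and your explicit remark on killing the contribution at $\partial A\cap\partial\Omega$ by taking $\Psi\in C^1_c(\Omega;\R^N)$ is a point the paper handles implicitly.
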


{We briefly comment on the hypotheses of the previous corollary.}
{\begin{remark}
We note that if $a=0$ and $b=1$, then the corollary holds in the heterogeneous setting without a priori assumptions on the regularity of $A$. Precisely, in dimension $N\leq 3$, using \cite{DePhilippisMaggi}, volume constrained minimizers $A$ of the weighted perimeter $E$ \eqref{eqn:limitEnergy} with regular surface tension have $C^2$-boundary. We suspect this regularity theory also holds more generally for functions $a$ and $b$ and sets $A$  satisfying the weighted constraint $\int_\Omega u_A \de x = m $ with $m \in (\dashint_\Omega a\,\de x,\dashint_\Omega b\,\de x)$, but a proof of this is beyond the scope of the paper.
\end{remark}}

We {also} note that there is a slight generalization of Theorem \ref{theorem:firstvar} {accounting for time-dependence}. This generalization will allow us to identify the surface velocity as the weighted mean curvature in our study of the sharp interface limit for the heterogeneous Allen--Cahn equation in Section \ref{sec:convAC}.

\begin{remark} \label{rmk:timeIntegratedFirstVar}
{Suppose} that for $T\in (0,\infty)$, there is a sequence $u_\eps \in L^2((0,T);H^2(\Omega))$ converging to a limit $u_A := {b\chi_A + a(1-\chi_A)} \in {L^1( (0,T);BV(\Omega))}$ in $L^1(\Omega\times (0,T))$  and is such that the energy convergence hypothesis
$$ \int_{0}^T E_\eps[u_\eps]\, \de t = \int_{0}^T E[u_A]\, \de t \quad \text{ as }\eps\to 0$$
holds. Then for $\Psi \in C([0,T];C^1(\overline{\Omega};\R^N))$ such that $\Psi\cdot n_{\Omega} = 0$ on $\partial \Omega\times (0,T)$, it holds that 
$$\int_0^T \nabla E_\epsilon [u_\eps](\gamma \nabla v_\eps \cdot \Psi)\, \de t \to  \int_{0}^T \delta_{{(-\Psi)}} E[u_A] \, \de t \quad \text{ as }\eps\to 0,$$
where we have used the notation from the theorem.
 As the proof of this generalization is the same as for Theorem \ref{theorem:firstvar}, but with numerous time integrals carried along, we only carry out the details for the static case of Theorem \ref{theorem:firstvar}.
\end{remark}

\subsection{Equipartition of energy}

{To prove the above results, we will often make use of the following intermediate result, which shows that the energy carried by the potential term and the gradient term is the same (even when localized).}

\begin{lemma}[Equipartition of energy]\label{lem:equipart}
Let $\{u_\epsilon\} \subset H^1(\Omega)$ and $A$ be a set of finite perimeter in $\Omega\subset \R^N$ with $u_\epsilon \to u_A = {b\chi_A + a(1-\chi_A)} \in BV(\Omega)$ in ${L^1(\Omega)}$. Supposing that $E_\eps[u_\eps] \to E[u] = \int_{\Omega}\sigma \, \de |\nabla \chi_A|$, the following relations hold:
\begin{align}
\lim_{\eps\to 0} \int_{\Omega}\left(\eps^{1/2}|\nabla u_\eps| -\frac{1}{\eps^{1/2}}\sqrt{2W(x,u_\eps)} \right)^2 \de x & = 0 \label{eqn:equipartition}\\
\frac{2}{\eps}W(x,u_\eps) \mathcal{L}^{N},\eps |\nabla u_\eps|^2\mathcal{L}^{N} ,\sqrt{2W(x,u_\eps)}|\nabla u_\eps|\mathcal{L}^{N}&\wkstr \sigma |\nabla \chi_A| \quad \text{ in }C(\overline{\Omega})^*. \label{eqn:energyConvergenceLocal}
\end{align}
\end{lemma}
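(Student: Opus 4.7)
The plan is to reduce both \eqref{eqn:equipartition} and \eqref{eqn:energyConvergenceLocal} to the single scalar convergence
\[
I_\eps := \int_\Omega \sqrt{2W(x,u_\eps)}\,|\nabla u_\eps|\, \de x \;\longrightarrow\; E[u_A].
\]
Expanding the square gives $\int_\Omega \bigl(\eps^{1/2}|\nabla u_\eps|-\eps^{-1/2}\sqrt{2W(x,u_\eps)}\bigr)^2\,\de x = 2E_\eps[u_\eps] - 2I_\eps$, so the energy convergence hypothesis together with $I_\eps\to E[u_A]$ immediately yields \eqref{eqn:equipartition}. Moreover, from the factorisation
$\eps|\nabla u_\eps|^2 - \tfrac{2}{\eps}W = (\eps^{1/2}|\nabla u_\eps|-\eps^{-1/2}\sqrt{2W})(\eps^{1/2}|\nabla u_\eps|+\eps^{-1/2}\sqrt{2W})$ and Cauchy--Schwarz, with the sum controlled in $L^2$ by $\sqrt{2E_\eps+2I_\eps}\leq C$, the three positive measures in \eqref{eqn:energyConvergenceLocal} differ by $L^1$-null sequences and hence share a common weak-$*$ limit.

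For $\limsup I_\eps \leq E[u_A]$, Young's inequality $\sqrt{2W}|\nabla u_\eps|\leq \eps^{-1}W+\tfrac{\eps}{2}|\nabla u_\eps|^2$ suffices. For the matching lower bound I would introduce the potential function
\[
\phi(x,u) := \int_{a(x)}^u \sqrt{2W(x,s)}\, \de s,
\]
so that $\phi(x,u_A)=\sigma\chi_A$ and, using $W(x,a(x))=0$, the chain rule gives
\[
\nabla\bigl[\phi(x,u_\eps(x))\bigr] = (\partial_x\phi)(x,u_\eps) + \sqrt{2W(x,u_\eps)}\nabla u_\eps,
\]
whence $\sqrt{2W(x,u_\eps)}|\nabla u_\eps|\geq |\nabla\phi(x,u_\eps)|-|(\partial_x\phi)(x,u_\eps)|$ pointwise.

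To pass to the limit in each of the two resulting terms I would first upgrade $u_\eps\to u_A$ to strong $L^2$-convergence: the $L^2$-coercivity \eqref{eqn:L2 growth} gives the tail bound $\int_{\{|u_\eps|>M\}}|u_\eps|^2\leq C\int W \leq C\eps\,E_\eps\to 0$, and on the complementary bounded region dominated convergence upgrades $L^1$- to $L^2$-convergence. Via the normalisation $v=(u-a)/\gamma$ together with \eqref{eqn:derivative control}, one checks the quadratic growth $|\phi(x,u)|,|(\partial_x\phi)(x,u)|\leq C(1+|u|^2)$, so the strong $L^2$-convergence of $u_\eps$ yields
\[
\phi(x,u_\eps)\to\sigma\chi_A \quad\text{and}\quad (\partial_x\phi)(x,u_\eps)\to\chi_A\nabla\sigma \quad\text{in }L^1(\Omega),
\]
the second identification coming from differentiating $\phi(x,b(x))=\sigma(x)$ and exploiting $W(x,b(x))=0$. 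Lower semicontinuity of the BV seminorm, combined with the Leibniz decomposition $D(\sigma\chi_A)=\sigma\nabla\chi_A+\chi_A\nabla\sigma\,\mathcal L^N$ into mutually singular parts, then gives
\[
\liminf_\eps \int_\Omega |\nabla\phi(x,u_\eps)|\,\de x \;\geq\; \int_\Omega \sigma\,\de|\nabla\chi_A| + \int_A|\nabla\sigma|\,\de x,
\]
whereas the error term contributes $\lim_\eps\int_\Omega|(\partial_x\phi)(x,u_\eps)|\,\de x = \int_A|\nabla\sigma|\,\de x$. The two absolutely continuous pieces cancel exactly, leaving $\liminf I_\eps\geq \int_\Omega\sigma\,\de|\nabla\chi_A|=E[u_A]$.

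With the scalar convergence established, I would finish \eqref{eqn:energyConvergenceLocal} by rerunning the chain-rule argument on an arbitrary open $U\subset\Omega$ (all steps localise) to obtain
$\liminf_\eps \int_U \sqrt{2W(x,u_\eps)}|\nabla u_\eps|\,\de x \geq \int_U \sigma\,\de|\nabla\chi_A|$, and then invoking the Portmanteau characterisation of weak-$*$ convergence for nonnegative Radon measures on $\overline\Omega$ (noting $\sigma|\nabla\chi_A|$ puts no mass on $\partial\Omega$ since $A$ has finite perimeter in $\Omega$) to conclude $\sqrt{2W(x,u_\eps)}|\nabla u_\eps|\mathcal L^N\wkstr \sigma|\nabla\chi_A|$; the opening paragraph then transfers this to the other two measures. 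The technical heart of the argument is the cancellation in the lower bound: the non-trivial absolutely continuous part of $D(\sigma\chi_A)$ causes the naive BV lower semicontinuity bound to overshoot the target by exactly $\int_A|\nabla\sigma|\,\de x$, an error absent in the classical homogeneous Modica--Mortola setting, and assumption \eqref{eqn:derivative control} is precisely what renders the compensating $(\partial_x\phi)(x,u_\eps)$ correction controllable and convergent to the matching limit.
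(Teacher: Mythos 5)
Your overall reduction to $I_\eps := \int_\Omega \sqrt{2W(x,u_\eps)}|\nabla u_\eps|\,\de x \to E[u_A]$, the derivation of \eqref{eqn:equipartition} from it, and the factorisation/Cauchy--Schwarz argument transferring the weak-$*$ limit between the three measures are all correct. The route you take to the key lower bound $\liminf_\eps I_\eps\geq E[u_A]$ differs genuinely from the paper: the paper simply cites \cite[eq.~(3.20) and Thm.~4.1]{Bouchitte} for this inequality (and then localises with a Fubini/Fatou trick), whereas you give a self-contained argument via the potential $\phi(x,u)=\int_{a(x)}^u\sqrt{2W(x,s)}\,\de s$, the chain rule $\nabla[\phi(x,u_\eps)] = (\partial_x\phi)(x,u_\eps) + \sqrt{2W(x,u_\eps)}\nabla u_\eps$, and lower semicontinuity of the total variation. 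The cancellation you identify -- that BV lower semicontinuity applied to $\phi(x,u_\eps)\to\sigma\chi_A$ overshoots by exactly $\int_A|\nabla\sigma|\,\de x$, and that the $\partial_x\phi$ correction contributes precisely the compensating amount -- is correct, and it is the genuine new feature over the homogeneous Modica--Mortola case. Equivalently, one could note that $\sqrt{2W(x,u_\eps)}\nabla u_\eps = \nabla[\phi(x,u_\eps)] - (\partial_x\phi)(x,u_\eps)$ converges distributionally to $D(\sigma\chi_A)-\chi_A\nabla\sigma\,\mathcal{L}^N = \sigma\nabla\chi_A$ and apply Reshetnyak lower semicontinuity directly, avoiding the triangle-inequality/mutual-singularity detour; but your version is also fine. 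Your Portmanteau argument for the localisation (including the observation that no mass can escape to $\partial\Omega$ under energy convergence) achieves the same as the paper's Fubini trick.

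There is one concrete gap: the claim that \eqref{eqn:derivative control} yields the quadratic growth $|\phi(x,u)|,|(\partial_x\phi)(x,u)|\leq C(1+|u|^2)$. Assumption \eqref{eqn:derivative control} bounds the \emph{spatial} derivative of $\sqrt{W_{\rm n}}$ by $\sqrt{W_{\rm n}}$ itself; it provides no upper bound whatsoever on $W_{\rm n}(x,v)$ as $|v|\to\infty$. The paper only imposes the \emph{lower} bound \eqref{eqn:L2 growth}, so $W$ -- and hence $\phi$ and $\partial_x\phi$ -- could in principle grow super-polynomially in $u$, and your Vitali step from $u_\eps\to u_A$ in $L^2$ to $\phi(x,u_\eps)\to\sigma\chi_A$ and $(\partial_x\phi)(x,u_\eps)\to\chi_A\nabla\sigma$ in $L^1$ would not be justified. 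The standard repair is to truncate: work with $\phi_M(x,u):=\int_{a(x)}^u\min\{\sqrt{2W(x,s)},M\}\,\de s$, which has linear growth in $u$, satisfies $\phi_M(x,u_A)=\sigma\chi_A$ for $M$ large enough (since $W$ is continuous and vanishes on the compact set $\{a(x)\leq s\leq b(x)\}$), and obeys $\sqrt{2W}|\nabla u_\eps|\geq\min\{\sqrt{2W},M\}|\nabla u_\eps|$, so the same argument still closes. With that repair in place, the proof is sound.
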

{We note that similar statements hold when using normalized quantities; for instance, under the hypothesis of the lemma, it holds that $\eps |\gamma \nabla v_\eps|^2{\mathcal{L}^N} \wkstr \sigma |\nabla \chi_A|$ in $C(\overline{\Omega})^*$ where as before $v_\eps :=\frac{u_\eps -a}{\gamma}$. When clear, we shall use such replacements without further comment.}

\begin{proof}
We expand the square as 
\begin{equation}\label{eqn:expandSquare}
\frac{1}{2} \int_{\Omega}\left(\eps^{1/2}|\nabla u_\eps| -\frac{1}{\eps^{1/2}}\sqrt{2W(x,u_\eps)} \right)^2\, \de x = E_\eps[u_\eps] - \int_\Omega \sqrt{2W(x,u_\eps)}|\nabla u_\eps|\, \de x.
\end{equation}
But by \cite[equation (3.20)]{Bouchitte} and the proof of \cite[Theorem 4.1]{Bouchitte}, we have 
\begin{equation}\label{eqn:bouchLocal}
E[u] \leq \liminf_{\eps \to 0}\int_\Omega \sqrt{2W(x,u_\eps)}|\nabla u_\eps|\, \de x.
\end{equation} 
Consequently, taking the $\limsup$ of (\ref{eqn:expandSquare}) gives (\ref{eqn:equipartition}). 

For (\ref{eqn:energyConvergenceLocal}), we just prove $\sqrt{2W(x,u_\eps)}|\nabla u_\eps|\mathcal{L}^{N}\wkstr \sigma |\nabla \chi_A|$, as the others are similar. Let $\psi\in C(\overline{\Omega};[0,1])$ be a fixed test function. Noting that (\ref{eqn:bouchLocal}) holds for any open set in place of $\Omega$ {(with the energy on the left-hand side restricted to the set too)}, we use a standard Fubini-type trick along with Fatou's lemma to find
\begin{align*}
\iO \psi  \sigma \, \de |\nabla \chi_A| =& \int_0^\infty \int_{\{\psi > t\}}\sigma \, \de |\nabla \chi_A|\, \de t \\
\leq & \liminf_{{\eps\to 0}} \int_0^\infty \int_{\{\psi > t\}}\sqrt{2W(x,u_\eps)}|\nabla u_\eps|\, \de x\, \de t = \liminf_{{\eps\to 0}} \iO \psi\sqrt{2W(x,u_\eps)}|\nabla u_\eps|\, \de x.
\end{align*}
As the same argument works for $(1-\psi)$ and $\iO\sqrt{2W(x,u_\eps)}|\nabla u_\eps|\, \de x \to E[u]$ (by (\ref{eqn:expandSquare}) and (\ref{eqn:equipartition})), we find 
$$\limsup_{{\eps\to 0}} \iO \psi\sqrt{2W(x,u_\eps)}|\nabla u_\eps|\, \de x\leq \iO \psi  \sigma \, \de |\nabla \chi_A|. $$ Uniting the $\liminf$ and $\limsup$ inequalities gives us the desired convergence of measures.
\end{proof}

\subsection{Proof of Theorem \ref{theorem:firstvar} (Convergence of first variations)}

\begin{proof}
We begin by noting that $\nabla u_\eps = \nabla a  + v_\eps \nabla \gamma +  \gamma \nabla v_\epsilon.$ It follows by (\ref{eqn:L2 growth}) that $\int_{\Omega} \eps|v_\eps|^{{2}} \, \de x\leq C\eps$ and
$\int_\Omega \eps |\nabla v_\eps|^2 \,  \de x \leq C$. By H\"older's inequality {and Young's inequality, respectively, we see}
$\int_{\Omega}\eps |\nabla v_\eps|\, \de x \to 0$ {and $\int_{\Omega}\eps |v_\eps||\nabla v_\eps|\, \de x \to 0$} as $\eps \to 0.$

\textit{Step 1 (Reformulation of the first variation of $E_\eps$).}
Using Einstein summation notation, we begin by rearranging the second term in ${\nabla E_\epsilon (u_\eps)(\gamma \nabla v_\eps \cdot \Psi)}$. We compute as follows:
\begin{align*}
\int_{\Omega} \eps \nabla u_\epsilon \cdot \nabla (\gamma \nabla v_\eps \cdot \Psi) \, \de x &= \iO \eps \partial_i u_\eps \partial_i(\gamma \partial_j v_\eps \Psi_j) \de x\\
& = \iO \eps\left(\partial_i u_\eps \partial_i\gamma \partial_j v_\eps \Psi_j + \partial_i u_\eps \gamma \partial_i\partial_j v_\eps \Psi_j + \partial_i u_\eps \gamma \partial_j v_\eps \partial_i\Psi_j\right) \de x\\
& = \iO \eps\left(\gamma (\nabla v_\eps \otimes \nabla v_\eps) : (\nabla \gamma\otimes \Psi) + \gamma^2 (\nabla v_\eps \otimes \nabla v_\eps ): \nabla \Psi + \partial_i u_\eps \gamma \partial_i\partial_j v_\eps \Psi_j\right) \de x \\
& \quad\quad +o_{\eps \to 0}(1),
\end{align*} 
where in the last equality we swapped the second and third terms and picked up errors from replacing $\nabla u_\eps$ by $\gamma \nabla v_\eps$. 

We expand the last term in the above display:
\begin{align*}
\iO \eps \partial_i u_\eps \gamma \partial_i\partial_j v_\eps \Psi_j \, \de x = & \iO \eps\left(\gamma^2 \partial_i v_\eps \partial_i\partial_j v_\eps \Psi_j + v_\eps \partial_i \gamma \gamma \partial_i\partial_j v_\eps \Psi_j \right) \de x + o_{\eps \to 0}(1) \\
= & \iO \eps\left(\gamma^2 \partial_i v_\eps \partial_i\partial_j v_\eps \Psi_j - \gamma(\nabla v_\eps \otimes \nabla v_\eps) : (\nabla \gamma\otimes \Psi) \right) \de x + o_{\eps \to 0}(1),
\end{align*}
where we have used an integration by parts {(with $\partial_j$ to avoid boundary contributions)} in the second equality and picked up errors in both equalities using the comments preceding the start of Step 1. 

Now we use integration by parts on the $j$th derivative to rewrite the first term of the previous display:
$$\iO \eps \gamma^2 \partial_i v_\eps \partial_i\partial_j v_\eps \Psi_j \, \de x =  - \int_{\Omega} \eps \left(\gamma^2 \partial_j\partial_i v_\eps \partial_i v_\eps \Psi_j  +  |\nabla v|^2 2 \gamma \nabla \gamma \cdot \Psi +\gamma^2|\nabla v|^2 \nabla \cdot\Psi \right) \de x.$$
As the left-hand side and the first term on the right-hand side are the same, we find
$$\iO \eps \gamma^2 \partial_i v_\eps \partial_i\partial_j v_\eps \Psi_j \, \de x =  - \int_{\Omega}  \left( \eps |\nabla v|^2 \gamma \nabla \gamma \cdot \Psi +\frac{\eps}{2}\gamma^2|\nabla v|^2 \nabla \cdot\Psi \right) \de x. $$

Finally, synthesizing the above equations, we have
\begin{equation}\label{eqn:gradTerm}
\begin{aligned}
\int_{\Omega} & \eps \nabla u_\epsilon \cdot \nabla (\gamma \nabla v_\eps \cdot \Psi) \, \de x \\
&=  \- \iO ({\rm Id} - n_\eps \otimes n_\eps):\nabla \Psi\, {\de (\eps |\gamma \nabla v_\eps|^2) } - \int \frac{1}{\gamma} \nabla \gamma \cdot \Psi\,  \de (\eps |\gamma \nabla v_\eps|^2) + \iO \frac{\eps}{2} \gamma^2 |\nabla v_\eps|^2 \nabla\cdot \Psi \, \de x,
\end{aligned}
\end{equation}
where we define {the $\eps$-normal}
\begin{equation}\label{eqn:neps}
{n_\eps : = \begin{cases}\nabla v_\eps / |\nabla v_\eps| & \text{ if }\nabla v_\eps \neq 0 \\ 0 & \text{ otherwise.} \end{cases}}
\end{equation} We note that we write the first two integrals on the right-hand side with respect to the approximate surface measure {$\eps |\gamma \nabla v_\eps|^2 \mathcal{L}^N$}.

We now turn to the term coming from the potential $W$, i.e., the first term of ${\nabla E_\epsilon (u_\eps)(\gamma \nabla v_\eps \cdot \Psi)}$. For this, we note that 
$$ {\nabla (W_{\rm n}(x,v_\eps) )}= \partial_x W_{\rm n} (x,v_\eps) + \partial_{u}W(x,u_\eps)\gamma \nabla v_\eps. $$
Consequently we have
\begin{align*}
\int_{\Omega} \frac{1}{\eps}\partial_{u} W(x,u_\epsilon) (\gamma \nabla v_\eps \cdot \Psi ) \, \de x =& \iO \frac{1}{\eps}\left(\partial_x (W_{\rm n}(x,v_\eps)) - \partial_x W_{\rm n} (x,v_\eps)\right)\cdot \Psi\, \de x \\
=&  - \iO \frac{1}{\eps}W_{\rm n}(x,v_\eps) \nabla \cdot \Psi\, \de x  -\iO\frac{1}{\eps}\partial_x W_{\rm n} (x,v_\eps)\cdot \Psi \, \de x.
\end{align*}
Looking to the last term of (\ref{eqn:gradTerm}), we have 
\begin{equation}\label{eqn:chemRemains}
\begin{aligned}\int_{\Omega}&  \frac{1}{\eps}\partial_{u} W(x,u_\epsilon) (\gamma \nabla v_\eps \cdot \Psi ) \, \de x + \iO \frac{\eps}{2} \gamma^2 |\nabla v_\eps|^2 \nabla\cdot \Psi \, \de x  \\
=& \iO \left(\frac{\eps}{2} \gamma^2 |\nabla v_\eps|^2 - \frac{1}{\eps}W_{\rm n}(x,v_\eps)\right)\nabla \cdot \Psi \, \de x  -\iO\frac{1}{\eps}\partial_x W_{\rm n} (x,v_\eps)\cdot \Psi \, \de x \\
=& -\iO\frac{1}{\eps}\partial_x W_{\rm n} (x,v_\eps)\cdot \Psi \, \de x + o_{\eps \to 0}(1)
\end{aligned}
\end{equation}
by the equipartition of energy Lemma \ref{lem:equipart}.

\textit{Step 2 (Weak convergence of the variation in the potential).}
We now replace the term remaining on the right-hand side of (\ref{eqn:chemRemains}) by a more convenient term. In particular,
\begin{equation}\label{eqn:chemRemains2}
\begin{aligned}
\iO\frac{1}{\eps}\partial_x W_{\rm n} (x,v_\eps)\cdot \Psi \, \de x &= \iO \frac{2}{\eps}\sqrt{W_{\rm n}}(x,v_\eps)\partial_x \sqrt{W}(x,v_\eps) \cdot \Psi \, \de x \\
&= \iO \left(\partial_x \sqrt{2W_{\rm n}}(x,v_\eps)|\gamma \nabla v_\eps|\right)\cdot \Psi \, \de x \\
& \quad + \iO \frac{1}{\eps^{1/2}}\partial_x \sqrt{2W_{\rm n}}(x,v_\eps) \cdot \Psi \left(\frac{1}{\eps^{1/2}}\sqrt{2W_{\rm n}}(x,v_\eps) - \eps^{1/2}|\gamma \nabla v_\eps|\right) \, \de x \\
& = \iO \left(\partial_x \sqrt{2W_{\rm n}}(x,v_\eps)|\gamma \nabla v_\eps|\right)\cdot \Psi \, \de x + o_{\eps\to 0}(1),
\end{aligned}
\end{equation}
where we used the equipartition {Lemma \ref{lem:equipart}} and \eqref{eqn:derivative control} to recover an error term.

We claim that the vectorial measure \begin{equation}\label{eqn:weakConvergenceClaim}
\partial_x \sqrt{2W_{\rm n}}(x,v_\eps)|\gamma \nabla v_\eps|\mathcal{L}^{N} \wkstr \gamma \nabla \sigma_{\rm n} |\nabla \chi_A|  \quad {\text{ in }C(\overline{\Omega};\R^N)^*},
\end{equation}
where $\sigma_{\rm n}$ is the normalized surface energy in (\ref{eqn:sigman}). {We {postpone} the proof of this claim to conclude the theorem.}

\textit{Step 3 (Conclusion).} We now {combine} the equations we have derived and pass to the limit to conclude the proof of Theorem \ref{theorem:firstvar}. 
The first term in \eqref{eqn:gradTerm} can be handled with the same technique as in \cite{LuckhausModica}. Precisely, we apply \Cref{lem:equipart} to see that 
\begin{equation}\label{eqn:firstVarRewrite}
- \iO ({\rm Id} - n_\eps \otimes n_\eps):\nabla \Psi\, \de (\eps |\gamma \nabla v_\eps|^2) = - \iO ({\rm Id} - n_\eps \otimes n_\eps):\nabla \Psi\, \de (\sqrt{W_{\rm n}(x,v_\ep)}|\gamma\nabla v_\ep|) + o_{\eps \to 0}(1)
\end{equation}
Defining {the measures $\nu_\ep: = \sqrt{W_{\rm n}(x,v_\ep)}\gamma \nabla v_\ep \mathcal{L}^N$}, 
we have that ${\nu_\ep}$ converges in the weak-star sense and in total variation to the measure ${\sigma} \nabla \chi_A$ by Lemma \ref{lem:equipart}.
From this, we deduce
\begin{equation}\label{eqn:limit_2}
\begin{aligned}
- \iO ({\rm Id} - n_\eps \otimes n_\eps):\nabla \Psi\, \de {\nu_\eps}
\to -\int_{\Omega} \sigma ({\rm Id} - n_A\otimes n_A):\nabla \Psi \, \de  |\nabla \chi_A|.
\end{aligned}
\end{equation}
by the Reshetnyak continuity theorem \cite[Theorem 2.39]{AmbrosioFuscoPallara} applied to the sequence of measures $\nu_\eps$ with the {$1$-homogeneous functions $f(x,p) := {|p|}(I - \frac{p}{|p|} \otimes \frac{p}{|p|}):\nabla \Psi(x)$.}
{Altogether,} the equipartition Lemma \ref{lem:equipart} and equations (\ref{eqn:gradTerm}), (\ref{eqn:chemRemains}), (\ref{eqn:chemRemains2}), (\ref{eqn:weakConvergenceClaim}), \eqref{eqn:firstVarRewrite}, and \eqref{eqn:limit_2} give that
\begin{equation}\nonumber
\begin{aligned}
\nabla E_\epsilon (u_\eps)(\gamma \nabla v_\eps \cdot \Psi) \to &
{ -\int_{\Omega} \sigma ({\rm Id} - n_A\otimes n_A):\nabla \Psi \, \de  |\nabla \chi_A|} -\iO (\sigma_{\rm n}\nabla \gamma + \gamma\nabla \sigma_{\rm n}) \cdot \Psi \, \de |\nabla \chi_A|  \\
=& -\int_{\Omega} \sigma ({\rm Id} - n_A\otimes n_A):\nabla \Psi \, \de  |\nabla \chi_A|-\iO \nabla \sigma \cdot \Psi \, \de |\nabla \chi_A|,
\end{aligned}
\end{equation}
where in the last line we have used (\ref{eqn:sigman}). {Consequently, we have found \eqref{eqn:GTthm}.}

{\textit{Step 4 (Proof of \eqref{eqn:weakConvergenceClaim}).}}
{It remains to prove} (\ref{eqn:weakConvergenceClaim}). {We do this in two substeps: we initially} allow the measure to see the gradient direction, and then we show that the gradient norm can be recovered by freezing the ``$\eps$-normal" direction.

First, note that it suffices to prove (\ref{eqn:weakConvergenceClaim}) by considering test functions $\Psi \in C_c(\Omega;\R^N)$. To see this, given $\Psi \in C(\overline{\Omega};\R^N)$, let $\theta \in C_c(\Omega;[0,1])$ be a function such that $\theta \equiv 1$ on $\{{\rm dist}(x,\partial \Omega) > \eta\}$ for some $\eta>0.$ Then 
\begin{equation}\nonumber
\begin{aligned}
& \left|\int_\Omega \partial_x \sqrt{2W_{\rm n}}(x,v_\eps)|\gamma \nabla v_\eps| \cdot \Psi  \de x - \int_\Omega \nabla \sigma_{\rm n}\cdot \Psi \de|\nabla \chi_A| \right| \\
& \leq \left|\int_\Omega \partial_x \sqrt{2W_{\rm n}}(x,v_\eps)|\gamma \nabla v_\eps| \cdot (\theta \Psi)  \de x - \int_\Omega \nabla \sigma_{\rm n}\cdot (\theta \Psi) \de|\nabla \chi_A| \right| \\
& \quad \quad 
+ \|\nabla \sigma_{\rm n}\|_\infty|\nabla \chi_A|(\{{\rm dist}(x,\partial \Omega) < \eta\}) 
+ CE_\eps [u_\eps;\{{\rm dist}(x,\partial \Omega) < \eta\}],
\end{aligned}
\end{equation}
where we have applied \eqref{eqn:derivative control} to recover the last term. Taking the $\limsup$ as $\eps\to 0$, we can bound the right-hand side of the previous inequality by $$C|\nabla \chi_A|(\{{\rm dist}(x,\partial \Omega) \leq \eta\}),$$
where we have used the energy convergence hypothesis to ensure that we do not pick up any charge on $\partial \Omega$. Letting $\eta\to 0$, we recover (\ref{eqn:weakConvergenceClaim}). From now, we may assume $\Psi \in C_c(\Omega;\R^N)$.

\textit{Substep 4.1 (Weak convergence of the measure with full gradient).}
We study the measure $$\mu_\eps : = \partial_x \sqrt{2W_{\rm n}}(x,v_\eps) \otimes  \nabla v_\eps \mathcal{L}^N.$$ Define the geodesic distance for the normalized function $W_{\rm n}$ by $$d_{\rm n}(x,\xi) : = \int_{0}^\xi \sqrt{2W_{\rm n}}(x,t)\, \de t,$$ and note that $\partial_x d_{\rm n}(x,v_\eps) = \int_0^{v_\eps}\partial_x\sqrt{2W_{\rm n}}(x,t)\, \de t$ to find
$$\nabla\left(\partial_x d_{\rm n}(x,v_\eps)\right) = \partial_x^2 d_{\rm n}(x,v_\eps) + \partial_x\sqrt{2W_{\rm n}}(x,v_\eps) \otimes \nabla v_\eps.$$
Letting $\Phi \in C_c(\Omega; \R^{N\times N})$ we find
\begin{align}
\iO \Phi: \de \mu_\eps = & \iO {\nabla}\left(\partial_x d_{\rm n}(x,v_\eps)\right) : \Phi \, \de x -\iO \partial_x^2 d_{\rm n}(x,v_\eps):\Phi \, \de x \nonumber \\ 
= & -\iO \partial_x d_{\rm n}(x,v_\eps) \cdot (\nabla \cdot \Phi) \, \de x -\iO \partial_x^2 d_{\rm n}(x,v_\eps):\Phi \, \de x \nonumber \\ 
\xrightarrow[\varepsilon \to 0]{}& -\iO \partial_x d_{\rm n}(x,\chi_A) \cdot (\nabla \cdot \Phi) \, \de x -\iO \partial_x^2 d_{\rm n}(x,\chi_A):\Phi \, \de x. \label{eqn:limmueps}
\end{align}
Noting that $\partial_x d_{\rm n}(x,1) = \nabla \sigma_{\rm n}$ and $\partial_x^2 d_{\rm n}(x,1) = \nabla^2 \sigma_{\rm n}$ and using an integration by parts, we have that the right-hand side of (\ref{eqn:limmueps}) can be re-written as
\begin{equation}\nonumber
\begin{aligned}
-\iO \chi_A \nabla \sigma_{\rm n} \cdot (\nabla \cdot \Phi)\, \de x -\iO \chi_A \nabla^2 \sigma_{\rm n} :\Phi \, \de x  = \iO\nabla \sigma_{\rm n}\otimes n_A:\Phi \, \de |\nabla \chi_A|.
\end{aligned}
\end{equation}
It follows from the above equation and (\ref{eqn:limmueps}) that 
\begin{equation}\label{eqn:muepsConv}
\mu_\eps \wkstr \nabla \sigma_{\rm n}\otimes n_A |\nabla \chi_A| =: \mu \quad {\text{ in } C_c(\Omega;\R^{N\times N})^*}.
\end{equation}

\textit{Substep 4.2 (Freezing the normal to recover (\ref{eqn:weakConvergenceClaim})).}
{Recall the $\eps$-normal $n_\eps$ associated with the diffuse interface defined in \eqref{eqn:neps}.} Then we have
$$\iO \left( \partial_x \sqrt{2W_{\rm n}}(x,v_\eps)| \nabla v_\eps|\right)\cdot \Psi \, \de x = \iO \Psi \otimes n_\eps: \de \mu_\eps. $$
Introducing the function $\eta \in C(\Omega;\R^N)$ (which will approximate $n_A$ and thereby $n_\eps$), we have that
\begin{equation}\label{eqn:mueps1}
\iO \Psi \otimes n_\eps:\de \mu_\eps = \iO \Psi \otimes \eta:\de \mu_\eps  + \iO \Psi \otimes (n_\eps-\eta): \de \mu_\eps .
\end{equation}
The first term will converge as we pass $\eps\to 0$ by the weak convergence of $\mu_\eps$. To control the error arising in the second term, we argue as follows. First we apply H\"older's inequality and (\ref{eqn:derivative control}) to find that
\begin{equation}\label{eqn:hoelderApp}
\iO \Psi \otimes (n_\eps-\eta):\de \mu_\eps \leq C(\sup_{\eps}E_\eps[u_\eps]^{1/2})\left(\int_{{\rm supp}\Psi} |n_\eps-\eta|^2 \sqrt{2W_{\rm {n}}}(x,v_\eps)|\nabla v_\eps|\, \de x\right)^{1/2}.
\end{equation}
To understand the limit of the last term, we first note (analogous to before) that $$\nabla (d_{\rm n}(x,v_\eps)) = \partial_x d_{\rm n}(x,v_\eps) + \sqrt{2W_{\rm {n}}}(x,v_\eps)\nabla v_\eps,$$ and so letting $\psi$ be a $C^1_c(\Omega)$ cut-off function that is equal to $1$ on ${\rm supp}\Psi$, we have
\begin{align*}
\int_\Omega \sqrt{2W_{\rm n}}(x,v_\eps) \nabla v_\eps \cdot (\eta \psi)\, \de x =& -\iO d_{\rm n}(x,v_\eps)(\nabla \cdot (\eta\psi))\, \de x - \iO \partial_x d_{\rm n}(x,v_\eps)\cdot (\eta\psi) \, \de x \\
\xrightarrow[\varepsilon \to 0]{}& -\iO d_{\rm n}(x,\chi_A)(\nabla \cdot (\eta \psi)) \, \de x = \iO \psi(\eta\cdot n_A) \sigma_{\rm n} \, \de |\nabla \chi_A|.
\end{align*}
With this, we can estimate the last term of (\ref{eqn:hoelderApp}) as follows:
\begin{align*}
\int_{{\rm supp}\Psi} &|n_\eps-\eta|^2 \sqrt{2W_{\rm {n}}}(x,v_\eps)|\nabla v_\eps|\, \de x \\
\leq & \int_{\Omega} \psi|n_\eps-\eta|^2 \sqrt{2W_{\rm {n}}}(x,v_\eps)|\nabla v_\eps|\, \de x \\
= & \int_{\Omega} \psi(1 + \eta^2) \sqrt{2W_{\rm {n}}}(x,v_\eps)|\nabla v_\eps|\, \de x -2 \int_\Omega\sqrt{2W_{\rm n}}(x,v_\eps) \nabla v_\eps \cdot (\eta \psi)\, \de x \\
\xrightarrow[\varepsilon \to 0]{}&  \int_{\Omega} \psi(1 + \eta^2) \sigma_{\rm {n}}\, \de |\nabla \chi_A| -2 \int_\Omega \psi(\eta\cdot n_A) \sigma_{\rm n}\, \de |\nabla \chi_A| = \int_\Omega \psi|n_A-\eta|^2 \sigma_{\rm n}\, \de |\nabla \chi_A|,
\end{align*}
where we have once again used the equipartition Lemma \ref{lem:equipart}.

Recalling (\ref{eqn:muepsConv}), we now pass to the limit in (\ref{eqn:mueps1}) to find that
\begin{equation}\nonumber
\limsup_{\eps \to 0}\left|\iO \Psi \otimes n_\eps:\de \mu_\eps -  \iO \Psi \otimes n_A:\de \mu\right| \leq C \int_\Omega \psi|n_A-\eta|^2 \sigma_{\rm n}\, \de |\nabla \chi_A|.
\end{equation}
By density {in $L^2(|\nabla \chi_A|; \R^N)$}, the right hand side may be taken arbitrarily small, thereby showing that
$$\iO \left(\partial_x \sqrt{2W_{\rm n}}(x,v_\eps)| \nabla v_\eps|\right)\cdot \Psi \, \de x = \iO \Psi \otimes n_\eps:\de \mu_\eps \xrightarrow[\varepsilon \to 0]{}   \iO \Psi \otimes n_A:\de \mu =  \iO \nabla \sigma_{\rm {n}} \cdot \Psi \, \de  |\nabla \chi_A|,$$ which concludes the proof of the claim (\ref{eqn:weakConvergenceClaim}) as $\gamma$ is a continuous and positive function.

\end{proof}

\subsection{Proof of Corollary \ref{cor:GT} (Gibbs--Thomson relation)}

\begin{proof}
{We first conclude the corollary by assuming that the Lagrange multiplier $\lambda_\eps\in \R$ converges to a limit $\lambda_0\in \R$. In the course of this, we will deduce a quick way to prove that $\lambda_\eps$ converges. }

\textit{Step 1 ({Gibbs--Thomson relation assuming $\lambda_\eps\to \lambda_0$}).}
{We test the weak form of the equation \eqref{eqn:EulerLagrangeWithConstraint} with the function $\gamma \nabla v_\eps \cdot \Psi$ for given $\Psi \in C^1(\overline{\Omega};\R^N)$ satisfying} $\Psi \cdot n_{ \Omega} = 0$ on $\partial \Omega$ to find 
\begin{equation}\label{eqn:GT_ep}
   {\lambda_\eps} \int_\Omega{ \gamma \nabla v_\ep\cdot \Psi\;\de x}=-\int_{\Omega} \left(\frac{1}{\eps}\partial_{u} W(x,u_\epsilon) (\gamma \nabla v_\eps \cdot \Psi ) + \eps \nabla u_\eps \cdot \nabla(\gamma \nabla v_\eps \cdot \Psi) \right) \de x.
\end{equation}
For the left hand side, we can integrate by parts and use {that} $v_\ep \to \chi_A$ {in $L^1(\Omega)$} to get 
 $$\lim_{\eps\to 0} {\lambda_\ep}\int_\Omega{\gamma \nabla v_\ep\cdot \Psi\;\de x} = \lim_{\eps\to 0} - {\lambda_\ep}\int_\Omega{\div{(\gamma \Psi)}v_\ep\,\de x} = -{\lambda_0}\int_\Omega{\div{(\gamma \Psi)}\chi_A\,\de x} = {\lambda_0} \int_\Omega{\gamma \Psi\cdot n_A\,\de |\nabla \chi_A|}.$$
 For the right hand side of \eqref{eqn:GT_ep}, we can compute the limit by applying Theorem \ref{theorem:firstvar} and integrating by parts using the fact that $A $ has $C^2$ boundary. {Precisely, passing to the limit in both sides of \eqref{eqn:GT_ep},} we obtain
 \begin{align*}
   {\lambda_0} \int_\Omega{\gamma \Psi\cdot n_A\,\de |\nabla \chi_A|} & =  \int_{\Omega} \sigma ({\rm Id} - n_A\otimes n_A):\nabla \Psi \, \de |\nabla \chi_A|+\iO \nabla \sigma \cdot \Psi \, \de |\nabla \chi_A|  \\
   & =  \iO  \left(-\sigma H_A +\nabla \sigma \cdot n_A\right) \Psi \cdot n_A\, \de |\nabla \chi_A| 
 \end{align*}
 where $H_A= - {\nabla \cdot} n_A$ is the mean curvature of the boundary of $A$. 
 {As $\Psi$ is arbitrary,} it follows that
$ \gamma {\lambda_0} = - \sigma H_A +  \nabla \sigma \cdot n_A $ along the boundary of $A$, and we recover the desired result.

{\textit{Step 2 (Convergence of $\lambda_\eps$).} Choose $\Psi \in C^1_c(\Omega;\R^N)$ such that $\int_\Omega{\gamma \Psi\cdot n_A\,\de |\nabla \chi_A|} \neq 0$, which is possible since $A$ has finite perimeter and nonempty reduced boundary (due to the mass-constraint). Equation \eqref{eqn:GT_ep} can be rewritten as
\begin{equation}\nonumber
\lambda_\eps = \frac{-\int_{\Omega} \left(\frac{1}{\eps}\partial_{u} W(x,u_\epsilon) (\gamma \nabla v_\eps \cdot \Psi ) + \eps \nabla u_\eps \cdot \nabla(\gamma \nabla v_\eps \cdot \Psi) \right) \de x}{\int_\Omega{ \gamma \nabla v_\ep\cdot \Psi\;\de x}}.
\end{equation}
As in Step 1, Theorem \ref{theorem:firstvar} guarantees the numerator converges, and the denominator converges to $\int_\Omega{\gamma \Psi\cdot n_A\,\de |\nabla \chi_A|}$. Thus, $\lambda_\eps$ converges to some $\lambda_0\in \R$ as desired.}
\end{proof}

\section{Convergence analysis of the gradient flow}\label{sec:convAC}

{The purpose of this section is show that, as $\eps \to 0$, solutions of the heterogeneous Allen--Cahn equation \eqref{eqn:AChetero} converge to a solution of weighted mean curvature flow \eqref{eqn:wMCFstrong}.
As we will consider} relatively general initial conditions, we {make use of} suitable weak solution concepts for both the diffuse interface equation  and its target curvature {flow.} For the Allen--Cahn equation we introduce the following solution concept. As the proof for existence of solutions {follows from} a standard minimizing movements scheme, we defer this result to Appendix {\ref{sec:ACexist}}.

\begin{definition}[Weak solution of the heterogeneous Allen--Cahn equation]\label{def:weakACsoln}
Let $N \geq 2$ with $\Omega\subset \R^N$ a $C^2$-domain, $W\in C^1(\overline{\Omega}\times \R)$, and consider a finite time horizon $T \in (0, \infty)$. 
For $\eps>0 $ and $u_{0,\eps}\in H^1(\Omega)\cap L^\infty(\Omega)$, we say a function
$$u_\eps \in H^{1}((0,T);L^2(\Omega))\cap L^2((0,T);H^2(\Omega))\cap L^\infty (\Omega\times (0,T)) $$ 
is a weak solution of the heterogeneous Allen--Cahn equation \eqref{eqn:AChetero} with initial condition $u_{0,\eps}$ if the following are satisfied:
\begin{enumerate}
\item \textit{(Initial condition).} $u_\eps(\cdot, 0) = u_{0,\eps}$.
\item \textit{(Evolution law).} For all $\phi \in C^1(\overline{\Omega})$ and $t\in (0,T)$,
$$ \int_{\Omega} \partial_t u_\eps(x,t) \phi \, \de x  = - \int_{\Omega} \Big(\nabla u_\eps(x,t)\cdot \nabla 
\phi + \frac{1}{\eps^2}\partial_u W(x,u_\eps(x,t))\phi \Big) \de x .$$
\item \textit{(Optimal dissipation equality)} For every $T'\in [0,T]$ 
\begin{equation}\label{eqn.dissipACen}
 E_\eps[u_\eps(\cdot,T')] + \int_0^{T'}\int_{\Omega} \eps |\partial_t u_\eps|^2\de x \, \de t = E_\eps [u_{0,\eps}].
\end{equation}
\end{enumerate}
\end{definition}

Our weak solution concept for weighted mean curvature flow relies on sets of finite perimeter and, in particular, their gradient, which were discussed in Subsection {\ref{subsec:notation}}. We will also rely on disintegrated measures (cf. Young measures), as introduced in \cite[Section 2.5]{AmbrosioFuscoPallara}.
\begin{definition}[{$BV$} solutions to weighted mean curvature flow]\label{def:distribsol}
Let $N \geq 2$ with $\Omega\subset \R^N$ a $C^2$-domain and consider a finite time horizon $T \in (0, \infty)$. Suppose $\sigma\in C^1(\overline{\Omega} ; (0,\infty)) $. For a set of finite perimeter $A_0 \subset \Omega$, we say a time-parametrized family of sets $A(t) \subset \R^N$ with finite perimeter, for $t \in [0, T]$, is a distributional (or BV) solution to weighted mean curvature flow \eqref{eqn:wMCFstrong} with initial condition $A_0$ if:
\begin{enumerate}
	\item (Existence of normal velocity and initial condition). There exists a $|\nabla \chi_{A(t)}| \otimes \mathcal{L}^1\llcorner (0,T)$-measurable function $V$
	such that
	\begin{equation}\label{eqn:VL2}
		\int_0^T \int_{\Omega} \sigma | V|^2 \, \mathrm{d}|\nabla \chi_{A(t)}| \,\mathrm{d}t < \infty 
	\end{equation}
and $V (\cdot,  t)$ is the normal velocity of $\partial^*A(t)$ in the direction $n_{A(t)} := \frac{\nabla \chi_{A(t)}}{|\nabla \chi_{A(t)}|}$ in the sense that for almost every $T' \in (0,T)$ and all $\zeta \in C^\infty_c(\overline{\Omega} \times [0,T))$, it holds that
\begin{equation}\label{eqn:transport}
	\int_{A(T')}  \zeta(\cdot, T') \, \mathrm{d}x - 	\int_{A_0}  \zeta(\cdot, 0) \, \mathrm{d}x  = \int_0^{T'} \int_{A(t)}  \partial_t \zeta \, \mathrm{d}x\, \mathrm{d}t - \int_0^{T'}
	\int_{\Omega}  V  \zeta  \, \mathrm{d}|\nabla \chi_{A(t)}| \, \mathrm{d}t.
\end{equation}
	\item (Motion law). For almost every $t \in  (0, T)$ and any test vector field $\Psi \in C^1 (\overline{\Omega};\R^N)$ with $\Psi\cdot n_\Omega = 0 $ on $\partial \Omega$, it holds that
	\begin{align}\nonumber
	& \int_{\Omega} \sigma V (\Psi\cdot  n_{ A(t)})   \, \mathrm{d} |\nabla \chi_{A(t)}| \\
	&\ \  =  - \int_{\Omega} \sigma (\operatorname{Id} -  n_{A(t)} \otimes  n_{A(t)} ): \nabla \Psi \, \mathrm{d}|\nabla \chi_{A(t)}|
		- 	 \int_{\Omega} \nabla \sigma \cdot \Psi \, \mathrm{d}|\nabla \chi_{A(t)}|. \label{eqn:motionlaw} 
	\end{align}
	\item (Optimal energy dissipation rate). For almost every $T' \in (0,T)$, we have
	\begin{equation}\label{eqn:optimaldissip}
		E[u_{{A(T')}}]+ \int_{0}^{T'} \int_{\Omega} \sigma |V|^2\, \mathrm{d}|\nabla \chi_{A(t)}|\, \mathrm{d}t \leq E[u_{A_0}],
	\end{equation}
where $u_A := b \chi_A + a (1-\chi_A)$ and $E[u_A]$ is defined in \eqref{eqn:limitEnergy}.
\end{enumerate}
\end{definition}
Note that $|\nabla \chi_{A(t)}| \otimes \mathcal{L}^1\llcorner (0,T)$-measurability is the same as being Borel-measurable, since the disintegrated measure is a Radon measure (see \cite[Section 2.5]{AmbrosioFuscoPallara}). We briefly remark that {the first item} of the solution concept is satisfied by generic interfaces evolving with square-integrable velocities. It is only in the motion law {\eqref{eqn:motionlaw}} that the velocity is specified to be the {weighted} mean curvature. {Additionally, {\eqref{eqn:motionlaw}} encodes $90^{\circ}$ contact angles for the interface along the domain boundary; other contact angles can be accounted for using a boundary energy term as in \cite{HenselLaux-contact}.} Finally, the dissipation law {\eqref{eqn:optimaldissip}} is a defining characteristic of the evolution that can be used to recover uniqueness for flows (see Theorem \ref{theo:weakstrong}). In fact, as in \cite{HenselLaux-varifold}, the dissipation can also be used to encode the motion law; we do not write our solution this way for the sake of clarity.

We turn to the main result of this section and show that under a suitable energy convergence hypothesis, solutions of the heterogeneous Allen–Cahn equation converge (up to a subsequence) to a distributional solution of weighted {mean curvature flow}. 

\begin{theorem}[Conditional convergence of the gradient flows]\label{theo:convAC}
 Let $N\geq 2$ with $\Omega\subset \R^N$ a $C^2$-domain, $T \in (0, \infty)$ be a finite time horizon, and let $W$ satisfy the assumptions within Subsection \ref{subsec:setting}. Consider a sequence $(u_{0,\eps})_{\eps >0}$ of initial conditions with 
 \begin{equation}\label{eqn:uniformControlIC}
 \sup_{\eps>0} \|u_{0,\eps}\|_{L^\infty(\Omega)} < \infty 
 \end{equation}
  such that there exists a set of finite perimeter $A_0$ satisfying
  \begin{equation}\label{eqn:convE0}
 \begin{aligned}
 	u_{0,\eps} &\rightarrow b \chi_{A_0} + a (1 - \chi_{A_0}) = : u_{A_0}  && \quad \text{ in } L^1(\Omega) \text{ as } \eps \rightarrow 0,\nonumber
 	\\
 	E_\eps[u_{0,\eps}] &\rightarrow E[u_{A_0}]  && \quad \text{ as } \eps \rightarrow 0. 
 \end{aligned}
 \end{equation}
 Let $u_\eps $ be the associated weak solution of the heterogeneous Allen-Cahn problem with initial condition $u_{0,\eps}$ as in Definition \eqref{def:weakACsoln} such that $\sup_{\eps>0}\|u_\eps\|_{L^\infty(\Omega\times (0,T))} <\infty$. Then the following holds:
 \begin{enumerate}
 \item \textit{(Compactness).}
 There exists a subsequence $\eps \rightarrow 0$ and a time-parametrized family of sets $A(t)$ with finite perimeter, for $t \in [0, T]$, such that
 \begin{equation}\label{eqn:convergenced}
 	u_\eps  \rightarrow   b \chi_{A} +a (1 - \chi_{A}) =: u_{A}  \quad \text{ strongly in } L^1(\Omega \times (0,T)) \text{ as } \eps \rightarrow 0,
 \end{equation}
where the identification $\chi_{A}(\cdot,t): = \chi_{A(t)}$ is such that $\chi_A \in C^{1/2}([0,T]; L^1(\Omega)) \cap BV(\Omega \times (0,T))$.

\item \textit{(Limit evolution).}
Assuming the energy convergence hypothesis
\begin{equation}\label{eqn:hpenergyconv}
	\lim_{\eps\to 0} \int_{0}^{T} E_\eps (u_\eps (\cdot, t)) \, \mathrm{d}t =  \int_{0}^{T} E ({u_{A (t)}}) \, \mathrm{d}t ,
\end{equation}
the flow $t\mapsto A(t)$  is a distributional solution to weighted mean curvature flow with initial condition $A_0$ as in Definition \ref{def:distribsol}.
\end{enumerate}
\end{theorem}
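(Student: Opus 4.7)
The plan follows the diffuse-to-sharp interface scheme of Luckhaus--Sturzenhecker \cite{LuckhausStur} and its recent refinements (e.g.\ \cite{HenselLaux-varifold,LauxSimon,kimMelletWu2022}), working with the normalized field $v_\eps := (u_\eps - a)/\gamma$ and delegating the curvature identification to \Cref{theorem:firstvar} together with its time-integrated version \Cref{rmk:timeIntegratedFirstVar}.

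\textbf{Compactness.} From the dissipation identity \eqref{eqn.dissipACen} and the well-prepared initial data I get $\sup_{t} E_\eps[u_\eps(\cdot,t)] \le C$ and $\int_0^T\!\!\int_\Omega \eps|\partial_t u_\eps|^2\,\de x\,\de t \le C$. I introduce the Bouchitt\'e primitive $\phi_\eps(x,t) := \int_0^{v_\eps(x,t)}\sqrt{2W_{\rm n}(x,s)}\,\de s$. Young's inequality, together with \eqref{eqn:derivative control}, gives $\phi_\eps$ bounded in $L^\infty((0,T);BV(\Omega))$, while the Cauchy--Schwarz estimate
\begin{equation*}
\int_{t_1}^{t_2}\!\!\int_\Omega |\partial_t \phi_\eps|\,\de x\,\de t \le \Big(\int_{t_1}^{t_2}\!\!\int_\Omega \tfrac{2W_{\rm n}(x,v_\eps)}{\eps}\Big)^{\!1/2}\Big(\int_{t_1}^{t_2}\!\!\int_\Omega \eps|\partial_t v_\eps|^2\Big)^{\!1/2} \le C(t_2-t_1)^{1/2}
\end{equation*}
yields space-time $BV$ compactness and $C^{1/2}$ regularity in time. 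Up to a subsequence $\phi_\eps \to \sigma_{\rm n}\chi_A$ in $L^1(\Omega\times(0,T))$, and since $s\mapsto\int_0^s \sqrt{2W_{\rm n}(x,t)}\,\de t$ is a pointwise Lipschitz homeomorphism and $\sigma_{\rm n}\ge c>0$, this implies \eqref{eqn:convergenced} with $\chi_A\in C^{1/2}([0,T];L^1(\Omega))\cap BV(\Omega\times(0,T))$.

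\textbf{Velocity, transport, and motion law.} On $\{\nabla v_\eps\neq 0\}$ I set $\tilde V_\eps := -\partial_t u_\eps/|\gamma\nabla v_\eps|$ and $\nu_\eps := \eps|\gamma\nabla v_\eps|^2\,\mathcal L^{N+1}$, so that $\int \tilde V_\eps^2\,\de\nu_\eps \le C$. By \Cref{lem:equipart} and the energy hypothesis \eqref{eqn:hpenergyconv}, $\nu_\eps \wkstr \sigma|\nabla\chi_{A(t)}|\otimes \de t$, and a weighted weak-$L^2$ extraction produces $V$ satisfying \eqref{eqn:VL2}. The transport equation \eqref{eqn:transport} then arises by passing to the limit in the time-integration-by-parts identity $\int_\Omega u_\eps(T')\zeta(T') - \int_\Omega u_{0,\eps}\zeta(0) = \int_0^{T'}\!\!\int(u_\eps\partial_t\zeta + \partial_t u_\eps\,\zeta)\,\de x\,\de t$ after rewriting $\partial_t u_\eps = -\tilde V_\eps|\gamma\nabla v_\eps|$ and using equipartition to replace the approximate surface measure by $\sigma|\nabla\chi_{A(t)}|\otimes \de t$. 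For \eqref{eqn:motionlaw}, I test the weak PDE against $\gamma\nabla v_\eps\cdot\Psi$ and multiply by $\eps$ to obtain
\begin{equation*}
\int_0^T\!\!\int_\Omega \eps \partial_t u_\eps\,(\gamma\nabla v_\eps\cdot\Psi)\,\de x\,\de t = -\int_0^T \nabla E_\eps[u_\eps](\gamma\nabla v_\eps\cdot\Psi)\,\de t.
\end{equation*}
The right-hand side converges to $\int_0^T\delta_{(-\Psi)}E[u_A]\,\de t$ by \Cref{rmk:timeIntegratedFirstVar}. The left-hand side factors as $-\int_0^T\!\!\int \tilde V_\eps(n_\eps\cdot\Psi)\,\de\nu_\eps$ with $n_\eps$ from \eqref{eqn:neps}, and combining the weak-$L^2(\nu_\eps)$ convergence of $\tilde V_\eps$ with strong $L^2(\nu_\eps)$ convergence $n_\eps\to n_A$ (as in Substep~4.2 of \Cref{theorem:firstvar}) produces \eqref{eqn:motionlaw}, where testing against time-dependent $\Psi$ and a density argument then gives the pointwise-in-$t$ statement. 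Finally, \eqref{eqn:optimaldissip} follows from \eqref{eqn.dissipACen}, the $\Gamma$-liminf inequality for $E_\eps$ at time $T'$, and the semicontinuity $\int\sigma V^2\,\de|\nabla\chi_A|\,\de t \le \liminf_\eps\int \eps|\partial_t u_\eps|^2$ with respect to the converging measures.

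\textbf{Main obstacle.} The decisive difficulty is the simultaneous passage to the limit in the left-hand side of the motion law: while $\tilde V_\eps$ converges only weakly in $L^2(\nu_\eps)$ and $\nu_\eps$ only weakly-* as measures, identifying the product limit as $-\int \sigma V(\Psi\cdot n_A)\,\de|\nabla\chi_A|\,\de t$ requires strong $L^2(\nu_\eps)$ convergence of the $\eps$-normal $n_\eps\to n_A$. This is precisely the Luckhaus--Sturzenhecker juncture at which the energy convergence hypothesis \eqref{eqn:hpenergyconv} is indispensable: it rules out loss of perimeter to interface multiplicity and activates the Reshetnyak/freezing mechanism from \Cref{theorem:firstvar} in the time-integrated setting needed here.
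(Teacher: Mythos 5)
Your compactness step (Bouchitt\'e primitive $\phi_\eps = d^\eps_{\rm n}$, uniform $BV$ bound, $C^{1/2}$-time regularity, Lipschitz inversion) is exactly the paper's Step~1, and your treatment of the motion law (test the weak PDE against $\gamma\nabla v_\eps\cdot\Psi$, identify the right side via \Cref{rmk:timeIntegratedFirstVar}, freeze the $\eps$-normal by approximating $n_A$ in $L^2$ and invoking equipartition) matches the paper's Substep~2.2. Likewise the dissipation inequality goes through as you describe. But the velocity/transport extraction as written has an $\eps$-balance gap.

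You set $\tilde V_\eps := -\partial_t u_\eps / |\gamma\nabla v_\eps|$ and $\nu_\eps := \eps|\gamma\nabla v_\eps|^2\mathcal L^{N+1}$, so that $\int\tilde V_\eps^2\,\de\nu_\eps = \int\eps|\partial_t u_\eps|^2 \le C$. That pairing is exactly right for the \emph{motion law}, since the left-hand side there reads $\int \eps(\gamma\nabla v_\eps\cdot\Psi)\partial_t u_\eps = -\int\tilde V_\eps(n_\eps\cdot\Psi)\,\de\nu_\eps$. However, to derive \eqref{eqn:transport} you propose passing to the limit in $\int\partial_t u_\eps\,\zeta = -\int\tilde V_\eps\zeta\,|\gamma\nabla v_\eps|\,\de x\,\de t$, and the measure $|\gamma\nabla v_\eps|\,\mathcal L^{N+1}$ is at the wrong power of $\eps$: it is not one of the equipartitioned measures ($\eps|\gamma\nabla v_\eps|^2\mathcal L^{N+1}$, $\tfrac{2W_{\rm n}}{\eps}\mathcal L^{N+1}$, $\sqrt{2W_{\rm n}}|\gamma\nabla v_\eps|\mathcal L^{N+1}$), and the density $|\gamma\nabla v_\eps|\,\de x = \tfrac{1}{\eps|\gamma\nabla v_\eps|}\,\de\nu_\eps$ is unbounded, so the bound $\int\tilde V_\eps^2\,\de\nu_\eps \le C$ gives you no control via Cauchy--Schwarz. (There is also a smaller normalization issue: \eqref{eqn:transport} carries no $\sigma$-weight, so even after fixing the measure you would need to peel off a factor of $\sigma$ at the end.)

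The fix, which is what the paper does, is to pass to the limit not in $\partial_t u_\eps$ but in the time derivative of the Bouchitt\'e primitive itself, $\partial_t d^\eps_{\rm n} = \sqrt{2W(\cdot,u_\eps)}\,\partial_t u_\eps/\gamma$. The extra $\sqrt{2W}$ restores the balance:
\begin{equation*}
\Big|\int \sqrt{2W(\cdot,u_\eps)}\,\partial_t u_\eps\,\phi\,\de x\,\de t\Big| \le \Big(\int \tfrac{2W}{\eps}\,\phi^2\Big)^{1/2}\Big(\int \eps|\partial_t u_\eps|^2\Big)^{1/2},
\end{equation*}
and by equipartition the first factor converges to $(\int\sigma\phi^2\,\de|\nabla\chi_{A(t)}|\,\de t)^{1/2}$; this yields $\sigma\partial_t\chi_A \ll |\nabla\chi_{A(t)}|\otimes\mathcal L^1$ with $L^2$ density, hence $V$, \eqref{eqn:VL2}, \eqref{eqn:transport}, and (by the same duality) the lower-semicontinuity in \eqref{eqn:optimaldissip}. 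A side benefit is that it also ties the weak limit of $\tilde V_\eps\,\de\nu_\eps$ in your motion-law step to the \emph{same} $V$ coming from the transport equation, because $\partial_t d_{\rm n}(x,v_\eps) = -\tilde V_\eps\,\sqrt{2W_{\rm n}}|\nabla v_\eps|$, and time-integration-by-parts on $d_{\rm n}(x,v_\eps)$ connects this directly to $\partial_t\chi_A$ --- a point your proposal leaves implicit.

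Finally, the ``main obstacle'' you flag (strong $L^2(\nu_\eps)$ convergence of $n_\eps$ under the energy hypothesis) is real, but it is the \emph{second} bottleneck; the $\eps$-balance in the velocity extraction is the one your argument as written does not clear.
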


Note, to simplify the proof of compactness we {consider} solutions of the heterogeneous Allen--Cahn equation that satisfy the bound $\sup_{\eps>0}\|u_\eps\|_{L^\infty(\Omega\times (0,T))} <\infty$. Theorem {\ref{thm:ACexist}} provides such solutions as a direct consequence of a maximum principle. One could alternatively relax the $L^\infty$ control on the initial condition in \eqref{eqn:uniformControlIC}, and impose polynomial growth on $W$ to obtain the solutions of the heterogeneous Allen--Cahn equation.

\begin{proof}[Proof {of Theorem \ref{theo:convAC}}]
In the following, we use the geodesic distances $d$ and $d_{\rm n}$ associated to the functions $W$ and $W_{\rm n},$ respectively, defined by 
\begin{equation}\nonumber
d(x,u) : = \int_{a(x)}^u \sqrt{2 W(x,s)}\, \de s \quad \text{ and } \quad d_{\rm n}(x,v) : = \int_{0}^v \sqrt{2 W_{\rm n}(x,s)}\, \de s.
\end{equation}
With $v_\eps : = \frac{u_\eps - a}{\gamma}$, we further define $d^\eps_{\rm n}(x) : = d_{\rm n}(x,v_\eps{(x)})$.

\textit{Step 1 (Compactness).} Note that the claimed convergence will follow from the convergence of $v_\eps$ to an appropriate characteristic function. To obtain compactness for $v_\eps$, we will show that $d^\eps_{\rm n}$ is uniformly bounded in $BV(\Omega\times (0,T))$, and then pull this information back to $v_{\eps}$. First, 
we have that $\|v_\eps\|_{L^\infty(\Omega\times (0,T))}<C $ for some $C>0$ and all $\eps>0.$ Due to the continuity of $d_{\rm n}$, we have also that $\|d^\eps_{\rm n}\|_{L^\infty(\Omega\times (0,T))}\leq C$ and, in particular, {$d_{\rm n}^\eps$} is uniformly bounded in {$L^1(\Omega\times (0,T))$} for all $\eps>0.$ 

For control on the gradients, we note that 
\begin{equation}\label{eqn:derivednRel}
\partial_t d^\eps_{\rm n} = \sqrt{2W(\cdot, u_\eps)} \partial_t u_\eps/\gamma \quad  \text{ and }\quad \nabla d^\eps_{\rm n} = \sqrt{2W(\cdot, u_\eps)} \nabla v_\eps  + \partial_x d_{\rm n}(\cdot,v_\eps)
\end{equation} 
Applying Young's inequality and the optimal dissipation relation \eqref{eqn.dissipACen}, we see that $$\| \partial_t d^\eps_{\rm n}  \|_{L^1(\Omega\times (0,T))} \leq C \sup_{\eps>0} E_\eps(u_{\eps,0})< \infty $$ and, 
{since $\partial_x d_{\rm n}$ is continuous with $v_\eps$ uniformly bounded,}
that 
 \begin{equation}\nonumber 
 \| \nabla  d^\eps_{\rm n}  \|_{L^1(\Omega\times (0,T))} \leq {C\Big(\sup_{\eps>0} E_\eps(u_{\eps,0}) + \| {\partial_x d_{\rm n}(\cdot,v_\eps)}  \|_{L^1(\Omega\times (0,T))}\Big)}  < \infty.
 \end{equation} 
 Compactness for uniformly bounded sequences in $BV$  \cite[Theorem 3.23]{AmbrosioFuscoPallara} shows that $d^{\eps}_{\rm n}$ converges in $L^1(\Omega\times (0,T))$ to a limit $d^0_{\rm n}\in BV(\Omega\times (0,T))$. Let $d^{-1}_{\rm n}(x,\cdot)$ be the inverse in the second input, which is well-defined (because $W_{\rm n}\geq 0$ with equality only at two points) and continuous in the second input. Thus possibly taking a subsequence in $\eps$, we have $$ v_\eps(x)  = d^{-1}_{\rm n}(x,d^\eps_{\rm n}(x))\to d^{-1}_{\rm n}(x,d^0_{\rm n}(x)) =: v_0(x) \quad \text{pointwise a.e. in $\Omega\times (0,T)$,}$$
 and therefore in $L^1(\Omega\times (0,T))$ up to a subsequence {(not relabeled)}. Note that measurability of $v_0$ follows since it is the limit of measurable functions, and we do not need the continuity of $d^{-1}_{\rm n}$ in space.

 To see that $v_\eps$ is the characteristic function of  a set, note that the dissipation relation \eqref{eqn.dissipACen} and {the convergence} $E_\eps [u_{0,\eps}] \to E[u_{A_0}]$ {as $\eps\to 0$} imply
 \begin{equation}\label{eqn:energyBoundallEps}
 \sup_{\eps>0 } \sup_{t\in [0,T]}E_\eps[u_\eps{(\cdot, t)}]\leq \sup_{\eps>0 } E_\eps[u_{0,\eps}]<\infty.
 \end{equation}
 Then energy bound \eqref{eqn:energyBoundallEps} and Fatou's lemma give
 $\int_{0}^T\int_{\Omega}W_{\rm n}(x,v_0(x))\, \de x \, \de t = 0,$ so that $v_0(x,t) = \chi_{A}(x,t)$ for some set $A\subset \Omega\times (0,T).$ Mapping this forward under $d_{\rm n}$, we see that $d^0_{\rm n} = \sigma_{\rm n}\chi_A$. Since $\sigma_{\rm n}$ is strictly bounded away from $0,$ we may use the coarea formula to conclude that $A$ is a set of finite perimeter in $\Omega\times (0,T)$, or equivalently $\chi_A\in BV(\Omega\times (0,T))$, as claimed. Here $A(t)$ is identified by the relation $\chi_{A(t)}: = \chi_{A}(\cdot, t)$.
 
The continuity in time of $A(t)$ in $L^1(\Omega)$ follows from the first relation in \eqref{eqn:derivednRel} for $\partial_t d^\eps_{\rm n}$. Precisely, considering $t<t'$ and using H\"older's inequality, the energy bound \eqref{eqn:energyBoundallEps}, and the dissipation equality \eqref{eqn.dissipACen}, we have
\begin{equation}\nonumber
\begin{aligned}
\int_{\Omega} |d^{\eps}_{\rm n}(\cdot, t) - d^{\eps}_{\rm n}(\cdot, t')| \,\de x   \leq \int_{t}^{t'}\int_{\Omega} |\partial_ t d^{\eps}_{\rm n}| \, \de x \, \de s & \leq C\left((t'-t)\sup_{\eps>0} E_\eps(u_{\eps,0})\right)^{1/2} \left(\int_{t}^{t'}\int_{\Omega} \eps |\partial_t u_\eps|^2 \, \de x\, \de s\right)^{1/2} \\
& \leq C (t' - t)^{1/2}\sup_{\eps>0} E_\eps(u_{\eps,0}),
\end{aligned}
\end{equation}
from which we pass $\eps\to 0$ to find
$\int_\Omega \sigma_{\rm n}|\chi_{A(t)} - \chi_{A(t')}| \, \de x \leq C (t' - t)^{1/2},$
thereby concluding the result since $\sigma_{\rm n}$ is bounded away from $0$.

 \textit{Step 2 (Existence of solutions).} We prove that solutions of weighted mean curvature flow exist under the energy convergence hypothesis \eqref{eqn:hpenergyconv}.
	
\textit{Substep 2.1 (Existence of the velocity $V$).} Arguing as in the proof of \cite[Lemma 7]{HenselLaux-contact}, one can show that $A(t)$ satisfies (1) of Definition \ref{def:distribsol}. We include the essential details for the reader.

Taking $\phi \in C_c^1 (\overline{\Omega}\times (0,T))$, we consider $\partial_t \chi_{A}$ as a distribution, use the compactness {of Step 1}, gradient relations \eqref{eqn:derivednRel}, the dissipation equality \eqref{eqn.dissipACen}, and the energy convergence hypothesis \eqref{eqn:hpenergyconv} to estimate
\begin{equation}\label{eqn:L2density}
\begin{aligned}
\langle \sigma \partial_t \chi_A,\phi \rangle & = - \lim_{\eps \to 0} \int_0^T \int_{\Omega}\sigma \frac{d^\eps_{\rm n}}{\sigma_{\rm n}}\partial_t\phi \, \de x \, \de t \\
& =  \lim_{\eps \to 0} \int_0^T \int_{\Omega}\sqrt{2W(x, u_\eps)} \partial_t u_\eps  \phi \, \de x \, \de t. \\
&\leq \liminf_{\eps \to 0} \left(\int_0^T \int_{\Omega} \eps |\partial_t u_\eps|^2 \, \de x \, \de t\right)^{1/2} \left(\int_0^T \int_{\Omega}  |\phi|^2 \frac{W(x, u_\eps)}{\eps} \, \de x \, \de t\right)^{1/2}\\
& \leq  \sup_{\eps>0 } E_\eps[u_{0,\eps}]^{1/2} \left(\int_0^T \int_{\Omega}  \sigma |\phi|^2 \, \de |\nabla \chi_{A(t)}| \, \de t\right)^{1/2}.
\end{aligned}
\end{equation}
Taking the supremum over $\phi$ supported in open sets with $|\phi|\leq 1$, {\eqref{eqn:L2density} implies} that the measure $\partial_t \chi_A$ is absolutely continuous with respect to $|\nabla \chi_{A(t)}|\otimes \mathcal{L}^1\llcorner (0,T)$. Defining $$V:=  - \frac{\de (\partial_t \chi_A)}{\de ( |\nabla \chi_{A(t)}|\otimes \mathcal{L}^1\llcorner (0,T) )},$$ the transport equation \eqref{eqn:transport} follows for $\zeta \in C_c^1(\overline{\Omega}\times (0,T))$; one can include the temporal boundary conditions using the fact that $\chi_{A(t)}$ is continuous in $L^1(\Omega).$ The bound \eqref{eqn:L2density} shows that integration with $V$ defines a bounded linear operator over $L^2(\sigma |\nabla \chi_{A(t)}|\otimes \mathcal{L}^1\llcorner (0,T)),$ thereby implying the claimed square integrability of $V$ in \eqref{eqn:VL2}.

\textit{Substep 2.2 (Motion law).}
Adopting the notation of Theorem \ref{theorem:firstvar}, (2) of Definition \ref{def:weakACsoln} shows that for $\Psi \in C^1_c(\overline{\Omega}\times(0,T);\R^N)$ with $\Psi\cdot n_{\Omega} = 0$ on $\partial \Omega\times (0,T)$, the weak solution $u_\eps$ satisfies
\begin{equation}\label{eqn:evoPreLim}
\int_{0}^{T} \int_{\Omega}(\eps ( \gamma\nabla v_\eps \cdot \Psi) \partial_t u_\eps )\, \de x \de t  = -\int_0^T \nabla E_\epsilon [u_\eps](\gamma \nabla v_\eps \cdot \Psi) \, \de t
\end{equation} 
 We further {claim} 
\begin{equation}\label{eqn:leftPass}
	\lim_{\eps\to 0} \left( \int_{0}^{T} \int_{\Omega}\eps ( \gamma\nabla v_\eps \cdot \Psi) \partial_t u_\eps \, \de x \de t \right) = -\int_{0}^{T} \int_{\Omega}  (\Psi \cdot n_{A(t)}) \sigma V \, \de |\nabla \chi_{A(t)}| \de t 
\end{equation}
for	all such test vector fields $\Psi.$
With this, we can conclude the proof of \eqref{eqn:motionlaw} by passing to the limit in \eqref{eqn:evoPreLim} using \eqref{eqn:leftPass} for the left side and Theorem \ref{theorem:firstvar} with Remark \ref{rmk:timeIntegratedFirstVar} for the right side. This gives the time integrated version of \eqref{eqn:motionlaw}, which can subsequently be localized {in time}.

We now prove the claim \eqref{eqn:leftPass}. 
For any $\eta \in C^1([0, T]; C(\overline{\Omega}; \R^N)) $ with $|\eta| \leq  1$ (approximating $n_{A(t)}$), adding zero twice, we can write
\begin{align}
\int_{0}^{T} \int_{\Omega} (\eps ( \gamma\nabla v_\eps \cdot \Psi) \partial_t u_\eps )\, \de x \, \de t 
&=  \int_{0}^{T} \int_{\Omega} \Psi \cdot (n_\eps - \eta ) \sqrt{\eps} \gamma|\nabla v_\eps|  \sqrt{\eps} \gamma \partial_t v_\eps \, \de x \, \de t \nonumber \\
&\quad + \int_{0}^{T} \int_{\Omega} (\Psi \cdot \eta) \Big(\sqrt{\eps} \gamma|\nabla v_\eps|  - \frac{1}{\sqrt{\eps}} \sqrt{2W_{\rm n}(x,v_\eps)}\Big) \sqrt{\eps} \gamma \partial_t v_\eps \, \de x \, \de t  \nonumber \\
&\quad + \int_{0}^{T} \int_{\Omega}  (\Psi \cdot \eta ) \gamma \partial_t d_{\rm n}(x,v_\eps) \, \de x \, \de t . \label{eqn:needt}
\end{align}
To pass to the limit in the last term, we compute
\begin{align*}
& \lim_{\eps\to 0}	\int_{0}^{T} \int_{\Omega}  (\Psi \cdot \eta )\gamma \partial_t d_{\rm n}(x,v_\eps) \, \de x \de t \\
& = 	- \lim_{\eps\to 0} \bigg(\int_{0}^{T} \int_{\Omega}  \partial_t (\Psi \cdot \eta ) d(x,u_\eps) \, \de x \de t +\int_{\Omega}   (\Psi \cdot \eta )(\cdot, 0) d(x,u_{\eps,0}) \, \de x \de t \bigg) \\
	&=   - \int_{0}^{T} \int_{\Omega}  \partial_t (\Psi \cdot \eta ) \sigma \chi_A  \, \de x \de t - \int_{\Omega}   (\Psi \cdot \eta )(\cdot, 0) \sigma \chi_A(\cdot, 0) \, \de x \de t \\
	&= {-} \int_{0}^{T} \int_{\Omega}  (\Psi \cdot \eta ) \sigma V  \, \de |\nabla \chi_{A(t)}| \de t. 
\end{align*}

Similar to the argument in Substep 4.2 of Theorem \ref{theorem:firstvar}, as $\eps\to 0$, one can estimate the first two terms on the right-hand side of \eqref{eqn:needt} by 
$$ C\left(\int_0^T \int_\Omega |n_{A(t)} - \eta|^2 \, \de |\nabla \chi_{A(t)}| \, \de t\right)^{1/2}$$
using the energy convergence hypothesis \eqref{eqn:hpenergyconv}, the time-integrated version of {Lemma \ref{lem:equipart}} (cf. Remark \ref{rmk:timeIntegratedFirstVar}), and the dissipation inequality \eqref{eqn.dissipACen}.
By density, we can let $\eta \to n_{A(t)}$ in $L^2(|\nabla \chi_{A(t)}| \otimes \mathcal{L}^1\llcorner (0,T))$ to conclude the proof the claim.  

\textit{Substep 2.3 (Optimal energy dissipation).} 
Note that \eqref{eqn:L2density} implies
\begin{equation}\nonumber
\begin{aligned}
-\int_0^T\int_\Omega \sigma V & \phi \, \de |\nabla \chi_{A(t)}| \, \de t = \langle \sigma \partial_t \chi_A,\phi \rangle \\
&\leq  \left( \liminf_{\eps \to 0} \int_0^T \int_{\Omega} \eps |\partial_t u_\eps|^2 \, \de x \, \de t\right)^{1/2}  \left(\int_0^T \int_{\Omega}  \sigma |\phi|^2 \, \de |\nabla \chi_{A(t)}| \, \de t\right)^{1/2},
\end{aligned}
\end{equation}
and letting $\phi$ approximate $-V$ we recover
\begin{equation}\label{eqn:dissLSC} \int_0^T \int_{\Omega}  \sigma |V|^2 \, \de |\nabla \chi_{A(t)}| \, \de t \leq \liminf_{\eps \to 0} \int_0^T \int_{\Omega} \eps |\partial_t u_\eps|^2 \, \de x \, \de t.
\end{equation}
The inequality \eqref{eqn:optimaldissip} then follows from \eqref{eqn.dissipACen}, \eqref{eqn:convE0}, \eqref{eqn:dissLSC}, and the fact that $E_\eps[u_\eps(\cdot,t)] \to E[u_{{A(t)}}]$ for almost every $t \in (0,T)$ due to \eqref{eqn:hpenergyconv}.
\end{proof}

\section{Weak-strong uniqueness for weighted mean curvature flow}
\label{sec:weakStrong}

Relying on {convergence of} the first variations ({see} Theorem \ref{theorem:firstvar}), we proved {that weak} solutions of the heterogeneous Allen–Cahn equation converge, up to a subsequence, to a $BV$
solution {of} weighted mean curvature flow in the sense of Definition \ref{def:distribsol} ({see} Theorem \ref{theo:convAC}). 
{In this section, we prove that our notion of distributional solution satisfies a weak-strong uniqueness principle.
When a strong solution of the curvature flow exists, this principle uniquely identifies the limit of solutions of the Allen--Cahn equation as the strong solution of the curvature flow, so that convergence holds for the entire sequence.}

{Our proof of weak-strong uniqueness for $BV$ solutions of weighted mean curvature flow follows the strategy used in {\cite{FischerHenselLauxSimon}}
for (standard) mean curvature flow, relying on the relative energy technique. 
For simplicity, we will consider an evolving surface in the full space $\R^d$, rather than in $\Omega,$ to focus on the effect of spatial heterogeneity.
To extend the analysis in a bounded domain $\Omega$, one needs to also account for contact of the evolving interface with the domain boundary. 
For this, one should be able to exploit ideas already developed in the previous works \cite{HenselLaux-contact,HenselMoser,HenselMarveggio} on the relative energy method for curvature driven flows with constant contact angle conditions.}

{We} introduce the notion of gradient flow calibration {for weighted mean curvature flow.}
\begin{definition}[Gradient flow calibration for weighted mean curvature flow] \label{def:gradflowcal}
    Let $\mathcal{A}(t)$, {for} $t \in [0,T]$, be a {time-parametrized} family of open subsets with smooth boundary $\partial \mathcal{A} (t)$. 
    {Fix the surface tension $\sigma\in C^{1,1}(\R^N ; (0,\infty))$ such that $0<\delta \leq \sigma \leq C_\delta$ for some constants $\delta, C_\delta>0$. }
    Let $\xi, B : \R^N \times [0,T] \rightarrow \R^N$ and let $\vartheta: \R^N \times [0,T] \rightarrow \R$. We call the tuple $(\xi, B, \vartheta)$ a {calibration} for {weighted} mean curvature flow if the following statements hold true.
    \begin{itemize}
        \item[i)] (Regularity). The tuple $(\xi, B, \vartheta)$ satisfy
        \begin{align}
      &      \xi \in C^{{2}} ([0,T]; C_c(\R^N)) \cap C([0,T]; C^1_c(\R^N)) , \quad B \in C([0,T]; C^1_c(\R^N)), \\
&\vartheta \in C^{{1}}(\R^d \times [0,T]) \cap L^\infty (\R^N \times [0,T]).
        \end{align}
        \item[ii)] (Extension of the normal vector field).
         The vector field $\xi$ extends the interior unit normal vector field of $\mathcal{A}(t)$, namely 
\begin{align} \label{eqn:xiconstraint}
    \xi(\cdot, t) = n_{\mathcal{A}(t)} \quad \text{ on } \partial \mathcal{A}(t),
\end{align}
and there exists a constant $c >0$ such that 
\begin{align}\label{eq:lenghtxi}
   | \xi(\cdot, t) |\leq \max\{0, 1-c \dist^2(\cdot, \partial  \mathcal{A}(t)) \},
\end{align}
for all $t \in [0,T]$.
         \item[iii)] (Extension of the normal velocity). The vector field $B$ extends the normal velocity of $\mathcal{A}(t)$, namely 
         \begin{align}\label{eqn:Bconstraint}
    B(\cdot, t) = \mathcal{V}n_{\mathcal{A}(t)} \quad  \text{ on } \partial \mathcal{A}(t).
\end{align}
         \item[iv)]  (Sign condition and coercivity for the transported {mass}).  The function $\vartheta$ represents a transported {mass} subject to
the sign conditions
\begin{align}
    \vartheta(\cdot, t ) &>0 \quad  \text{ in } \mathcal{A}(t), \\
    \vartheta(\cdot, t ) &< 0 \quad \text{ in  } \R^N \setminus \overline{\mathcal{A}(t)}, \\
     \vartheta(\cdot, t ) &= 0 \quad \text{ on  } \partial {\mathcal{A}(t)},
\end{align}
for all $t \in [0,T]$. Moreover, there exists a constant $C>0$ such that
\begin{align}
\min\{\dist (\cdot, \partial \mathcal{A}(t) ) , 1\} \leq C |\vartheta (\cdot, t)|
\end{align}
for all $t \in [0,T]$.
         \item[v)] (Approximate evolution equations). The tuple $(\xi, B, \vartheta)$ evolves in time according to 
         \begin{align}
	\partial_t \xi + (B \cdot \nabla ) \xi + (\nabla B )^\mathsf{T} \xi &=O(\dist(\cdot, \partial {\mathcal{A}(t)} )),  \label{eqn:evxi}
	\\
		\partial_t |\xi|^2 + (B \cdot \nabla ) |\xi |^2  &=O(\dist^2(\cdot, \partial {\mathcal{A}(t)} )), \label{eqn:evxi2}
\\ \label{eqn:evvartheta}
	 \partial_t \vartheta + (B \cdot \nabla ) \vartheta  &=O(\dist(\cdot, \partial {\mathcal{A}(t)} )),
\end{align}
for all $t \in [0,T]$.
          \item[vi)] (Geometric equation). For all $t \in [0,T]$, {it holds that}
          \begin{align}\label{eq:geomB}
-	\nabla \cdot \xi -  \nabla \log \sigma \cdot \xi =  B \cdot \xi + O(\dist (\cdot, \partial {\mathcal{A}(t)} )).
\end{align}
    \end{itemize}
\end{definition}
{We note that, at odds with convention, we call $\vartheta$ a `mass' since we reserve `weight' for $\sigma.$}

{We comment that the class of calibrated flows includes strong solutions of the curvature flow.} {Precisely, consider a} time-parametrized family of bounded open sets $\mathcal{A}(t) \subset \R^N$, {for} $t \in [0,T]$, with $\partial \mathcal{A} (t)$ smoothly evolving by weighted mean curvature flow, {in the sense that} the {surface} normal velocity $\mathcal{V}(\cdot, t)$ of $\partial \mathcal{A} (t)$  is given by
\begin{equation}\nonumber
	\mathcal V(\cdot, t) = - \frac{1}{\sigma}\div{(\sigma n_{ \mathcal{A}(t)} )} = H_{\partial \mathcal{A}(t)}  - \nabla \log \sigma  \cdot n_{ \mathcal{A}(t)} ,
\end{equation}
where $ H_{{\mathcal{A}(t)}}  $ denotes the mean curvature of $\partial \mathcal{A} (t)$ and $\sigma$ the heterogeneous surface {tension}.
{We briefly write down a calibration for the smooth evolution and refer to \cite[Section 4.1-4.2]{FischerHensel} (see also \cite[Lemma 5.5]{LauxStinsonUllrich22}) for  the technical details verifying that it is a calibration as in Definition \eqref{def:gradflowcal}.}
For each $t \in [0, T ]$, let $P_{\partial \mathcal{A}(t)}: \R^N \rightarrow \partial \mathcal{A}(t)$ be the nearest point projection {onto} 
$\partial \mathcal{A}(t)$ and let ${r}>0$ be such that $P_{\partial \mathcal{A}(t)}$ is smooth in $\mathcal U _{{r}} (t)$, a tubular neighborhood {around $\partial \mathcal A(t)$ with} width ${r}$, for {every} $t \in [0,T]$. 
One can define the vector field $\xi$ as
\begin{align} \nonumber
	\xi(\cdot ,t) := g( \dist(\cdot , \partial \mathcal{A}(t) )) n_{ \mathcal A(t)}(P_{\partial \mathcal{A}(t)} \cdot ) \quad  \text{ in } {\R^N}, 
\end{align}
where $g$ is a smooth {cut-off} function satisfying $g(0)=1$, $g(s) \leq \max \{1-{c} s^2, 0\}$ for some $c>1/r^2$ so that $g$ is zero outside of the tubular neighborhood of $\partial \mathcal A(t)$.
Similarly, one can define 
\begin{align} 
    B(\cdot ,t) &:= \mathcal V(P_{\partial \mathcal{A}(t)} \cdot, t ) {\xi(\cdot ,t)}\quad  \text{ in } {\R^N}, \nonumber \\
	\vartheta(\cdot ,t) &:= \tau( \sdist(\cdot , \partial \mathcal{A}(t) ))  \quad  \text{ in } {\R^N}, \nonumber
\end{align} 
where $\tau$ is a smooth and non-decreasing truncation of the identity such that $\tau(s) = s$ for $|s| \leq {r}/2$ and $\tau(s)={r}\operatorname{sign} (s)$ for $|s| \geq {r}$.

Given a {$BV$} solution {$A(t)$} in the sense of Definition \ref{def:distribsol} and a {calibrated} evolution {$\mathcal{A}(t)$} as in Definition \ref{def:gradflowcal}, {both defined} up to a common finite time horizon $T \in (0, \infty)$, we introduce {the} relative energy functional
\begin{equation}\label{eqn:relen}
	E_{\text{rel}}[u_A|u_{\mathcal{A}} ](t):= \int_{\R^N} \sigma (1 - n_{A(t)}\cdot \xi) \, \de |\nabla \chi_{A(t)}|
\end{equation}
where $u_{\mathcal{A}} := b \chi_{\mathcal A} + a (1-\chi_{\mathcal A})$, and the bulk energy functional
\begin{equation}\label{eqn:bulken}
	E_{\text{bulk}}[u_A|u_{\mathcal{A}} ] (t):= \int_{\R^N} \sigma (\chi_{\mathcal A(t)} - \chi_{A(t)}) \vartheta \, \de x . 
\end{equation}
{Note that both integrands are positive everywhere.}
These two error functionals measure the difference between a given $BV$ solution and a {calibrated} evolution. {Precisely, we prove that they satisfy stability estimates leading to a weak-strong uniqueness principle: }

\begin{theorem}[Weak-strong uniqueness for weighted {mean curvature flow}] \label{theo:weakstrong}
 Let $N \geq 2$. Consider $T \in (0, \infty)$ and let $\mathcal A (t)$, $t \in [0,  T]$, be a calibrated gradient flow in the sense of Definition \ref{def:gradflowcal}. Let $A(t)$, $t \in [0, T^*
]$ , $T^* \geq T$, be a $BV$ solution to {weighted mean curvature flow} in the sense of Definition \ref{def:distribsol}.
Then there exists a constant $C > 0$ depending only on $(\xi, B, \vartheta)$ such that for almost every $T' \in (0,T)$ 
\begin{align}
	E_{\mathrm{rel}}[u_A|u_{\mathcal{A}}](T') &\leq E_{\mathrm{rel}}[u_A|u_{\mathcal{A}} ](0) + C \int_{0}^{T'} E_{\mathrm{rel}}[u_A|u_{\mathcal{A}} ](t) \, \de t , \label{eqn:relenestimate}\\
	  E_{\mathrm{bulk}}[u_A|u_{\mathcal{A}}](T') &\leq E_{\mathrm{rel}}[u_A|u_{\mathcal{A}}](0) + E_{\mathrm{bulk}}[u_A|u_{\mathcal{A}} ](0) + C \int_{0}^{T'} (E_{\mathrm{rel}}[u_A|u_{\mathcal{A}}](t)+ E_{\mathrm{bulk}}[u_A|u_{\mathcal{A}} ](t)) \, \de t , \label{eqn:bulkenestimate}
\end{align}
where $u_A := b \chi_{A} + a (1-\chi_{A})$ and $u_{\mathcal{A}} := b \chi_{\mathcal A} + a (1-\chi_{\mathcal A})$.
In particular,  we obtain a weak-strong uniqueness principle, i.e., if ${ A (0)} = {\mathcal{A}{(0)}} $ up to a set of zero Lebesgue measure, then ${ A (t)} = {{\mathcal{A} (t)}} $ up to a set of zero Lebesgue measure for a.e. $t \in (0, T)$.
\end{theorem}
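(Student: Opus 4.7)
The plan is to run the standard relative entropy (or ``gradient flow calibration'') argument, adapted to the weighted surface tension $\sigma$. The starting point is the algebraic identity
\begin{equation*}
1-n_{A(t)}\cdot\xi(\cdot,t)=\tfrac12|n_{A(t)}-\xi(\cdot,t)|^2+\tfrac12\bigl(1-|\xi(\cdot,t)|^2\bigr),
\end{equation*}
which, together with the length bound \eqref{eq:lenghtxi}, shows that $E_{\mathrm{rel}}$ is coercive: it controls both the tilt $|n_A-\xi|^2$ and $\min\{\dist^2(\cdot,\partial\mathcal{A}(t)),1\}$ against $\sigma\, d|\nabla\chi_{A(t)}|$. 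Because of the positivity of $\sigma$, this coercivity is equivalent up to a multiplicative constant to the one obtained in \cite{FischerHenselLauxSimon}, so all tilt-excess estimates of the unweighted theory transfer.

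The central step is to differentiate
\begin{equation*}
E_{\mathrm{rel}}[u_A|u_{\mathcal A}](t)=\int_{\R^N}\sigma\,d|\nabla\chi_{A(t)}|-\int_{\R^N}\sigma\,\xi\cdot d\nabla\chi_{A(t)}
\end{equation*}
in time. For the first term I would use the optimal dissipation inequality \eqref{eqn:optimaldissip} together with $E[u_{A(T')}]\le E[u_{A_0}]-\int_0^{T'}\!\!\int\sigma V^2\,d|\nabla\chi_{A(t)}|\,dt$. For the second term, the idea is to move $\sigma\xi$ inside and integrate by parts, using the motion law \eqref{eqn:motionlaw} tested against $\Psi=\sigma\xi$ (which is admissible since we work in $\R^N$, so the boundary condition is vacuous) and the transport equation \eqref{eqn:transport} tested against $\nabla\cdot(\sigma\xi)$. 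Combining these reproduces, up to terms of order $O(\dist(\cdot,\partial\mathcal{A}(t)))$ coming from \eqref{eqn:evxi}--\eqref{eq:geomB}, the expression
\begin{equation*}
\tfrac{d}{dt}E_{\mathrm{rel}}\le-\int\sigma|V-B\cdot n_A|^2\,d|\nabla\chi_{A(t)}|-\int\sigma\bigl(B\cdot\xi+\nabla\cdot\xi+\nabla\log\sigma\cdot\xi\bigr)(V-B\cdot n_A)\,d|\nabla\chi_{A(t)}|+R(t),
\end{equation*}
where the geometric equation \eqref{eq:geomB} makes the second integrand of order $O(\dist)$. Completing the square in $V$, the negative dissipation absorbs any term linear in $V$, and what remains is bounded by $E_{\mathrm{rel}}$ via the coercivity above plus the smoothness of $\sigma$, $\xi$, $B$. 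This yields the Gronwall inequality \eqref{eqn:relenestimate}.

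For the bulk estimate \eqref{eqn:bulkenestimate} I would test the transport equation \eqref{eqn:transport} with $\sigma\vartheta$, using the evolution equation \eqref{eqn:evvartheta} to write $\partial_t(\sigma\vartheta)=-B\cdot\nabla(\sigma\vartheta)+\sigma\,O(\dist)+\vartheta\,O(1)$, and compare with the classical side where $\chi_{\mathcal A}$ is transported by $B$. The bulk term $\sigma\vartheta(V-B\cdot n_A)$ on $\partial^*A$ is controlled by Young's inequality: one factor against the dissipation, the other against $\sigma\vartheta^2\lesssim E_{\mathrm{rel}}$ (again by the coercivity $|\vartheta|\gtrsim\min\{\dist,1\}$). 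The remaining $\vartheta$-terms from the evolution of $\vartheta$ give the contribution of $E_{\mathrm{bulk}}$ itself in Gronwall, yielding \eqref{eqn:bulkenestimate}. The weak-strong uniqueness then follows: if the initial data coincide, Gronwall forces $E_{\mathrm{rel}}\equiv E_{\mathrm{bulk}}\equiv0$; the first forces $n_A=\xi$, hence $\partial^*A(t)\subset\partial\mathcal{A}(t)$, and the second, together with the sign condition on $\vartheta$, forces $\chi_A=\chi_{\mathcal A}$ a.e.

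The main obstacle is the careful bookkeeping of the error terms in differentiating $\int\sigma\xi\cdot d\nabla\chi_A$ when $A(t)$ has only $BV$ regularity: one cannot directly use the classical chain rule, but rather must feed $\sigma\xi$ and $\nabla\cdot(\sigma\xi)$ as test vector fields and functions in \eqref{eqn:motionlaw} and \eqref{eqn:transport}, respectively, and then reassemble the result using the approximate evolution equation \eqref{eqn:evxi}. The appearance of $\nabla\log\sigma\cdot\xi$ in the geometric equation \eqref{eq:geomB} is precisely what is needed to cancel the weighted divergence $\nabla\cdot(\sigma\xi)=\sigma(\nabla\cdot\xi+\nabla\log\sigma\cdot\xi)$ against $\sigma B\cdot\xi$, ensuring that the linear-in-$V$ term in the time-derivative is of order $O(\dist)$, which then closes the Gronwall loop.
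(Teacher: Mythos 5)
Your sketch captures the right overall strategy and the key new algebraic observation in the weighted setting, but there are two substantive issues compared to the paper's proof.

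First, the test function you feed to the motion law should be the extended normal velocity $B$, not $\sigma\xi$. Testing \eqref{eqn:motionlaw} with $\Psi = B$ produces the term $\int \sigma V\,(B\cdot n_{A(t)})\,\de|\nabla\chi_{A(t)}|$, which is exactly what is needed to complete the square $|V n_{A(t)} - B|^2$ that appears in your own intermediate inequality; it also produces the curvature-type terms $\int \sigma(\operatorname{Id}-n_{A(t)}\otimes n_{A(t)}):\nabla B$ and $\int\nabla\sigma\cdot B$. Testing with $\sigma\xi$ instead yields $\int \sigma^2 V(\xi\cdot n_{A(t)})$ plus a first variation of $\sigma\xi$, neither of which matches anything in your claimed intermediate expression nor helps to cancel the terms coming from $\partial_t\xi$. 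The vector field $\sigma\xi$ does enter the paper's proof, but in a different place: as the test function $\nabla\cdot(\sigma\xi)$ in \eqref{eqn:transport}, precisely as you describe for the other half of your computation.

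Second, the crucial bookkeeping that you flag as ``the main obstacle'' cannot be closed by \eqref{eqn:evxi} alone. After substituting $\partial_t\xi = -(B\cdot\nabla)\xi - (\nabla B)^{\mathsf T}\xi + O(\dist)$ into $-\int\sigma\,\partial_t\xi\cdot n_{A(t)}\,\de|\nabla\chi_{A(t)}|$, one is left with the term $\int\sigma\,((B\cdot\nabla)\xi)\cdot n_{A(t)}\,\de|\nabla\chi_{A(t)}|$, which has no sign and is not directly controlled by the relative energy. The paper's Step 3 deals with this via the distributional identity
\begin{equation*}
0 = -\int_{\R^N}\chi_{A(t)}\,\nabla\cdot\big(\nabla\cdot(\sigma B\otimes\xi - \xi\otimes\sigma B)\big)\,\de x
= \int_{\R^N} n_{A(t)}\cdot\big(\nabla\cdot(\sigma B\otimes\xi - \xi\otimes\sigma B)\big)\,\de|\nabla\chi_{A(t)}|,
\end{equation*}
which swaps $(B\cdot\nabla)\xi$ for $(\xi\cdot\nabla)B$ (at the cost of some lower-order $\nabla\sigma$-terms), and only then do the remaining expressions reduce to $-\int\sigma\,((n_{A(t)}-\xi)\otimes(n_{A(t)}-\xi)):\nabla B$ plus geometric-equation errors, all bounded by the coercivity of $E_{\mathrm{rel}}$. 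This integration-by-parts step is a genuine ingredient of the argument, not mere bookkeeping, and is absent from your sketch. The rest of your outline -- the coercivity identity, use of \eqref{eqn:optimaldissip}, testing \eqref{eqn:transport} with $\nabla\cdot(\sigma\xi)$, the role of \eqref{eq:geomB} in making $\nabla\cdot(\sigma\xi)+\sigma B\cdot\xi = O(\dist)$, the Young-inequality absorption for the bulk estimate, and the Gronwall conclusion -- all match the paper's proof.
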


\subsection{Intermediate result: Coercivity properties}
First, we collect some basic coercivity properties of the relative energy functional \eqref{eqn:relen}. 
\begin{lemma}
	Under the assumptions of Theorem \ref{theo:weakstrong}, there exists a constant $C>0$ such that the relative energy functional \eqref{eqn:relen} satisfies
	\begin{align}
		\int_{\R^N}  \frac12 \sigma |n_{A(t)}- \xi |^2 \,\de |\nabla \chi_{A(t)}| \leq C E_{\operatorname{rel}}[u_A|u_{\mathcal{A}} ](t),  \label{eqn:coern}\\
  \int_{\R^N} \sigma \min\{\dist^2 (\cdot, \partial \mathcal{A}(t) ) , 1\}\,\de |\nabla \chi_{A(t)}| \leq C  E_{\operatorname{rel}}[u_A|u_{\mathcal{A}} ](t), \label{eqn:coerdist} \\
			\int_{\R^N} \sigma \vartheta^2 \,\de |\nabla \chi_{A(t)}| \leq C  E_{\operatorname{rel}}[u_A|u_{\mathcal{A}} ](t), \label{eqn:coervartheta}
	\end{align}
 for all $t \in [0,T]$.
\end{lemma}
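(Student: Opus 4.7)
All three bounds follow by pointwise integrand estimates, after which we integrate against $\sigma \, d|\nabla \chi_{A(t)}|$. The key identities are $|n_{A(t)}|=1$ $|\nabla \chi_{A(t)}|$-a.e., the length constraint $|\xi|\le \max\{0,1-c\dist^2(\cdot,\partial\mathcal{A}(t))\}$ from \eqref{eq:lenghtxi}, and the regularity and vanishing of $\vartheta$ on $\partial\mathcal{A}(t)$.

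\textbf{Step 1 (Bound \eqref{eqn:coern}).} Expand
\begin{equation*}
|n_{A(t)}-\xi|^2 = 1 - 2\, n_{A(t)}\cdot \xi + |\xi|^2 = 2(1- n_{A(t)}\cdot \xi) - (1 - |\xi|^2).
\end{equation*}
Since $|\xi|\le 1$ by \eqref{eq:lenghtxi}, the subtracted term is nonnegative, and therefore $\tfrac12|n_{A(t)}-\xi|^2 \le 1 - n_{A(t)}\cdot \xi$ pointwise. Multiplying by $\sigma$ (which is positive) and integrating against $|\nabla\chi_{A(t)}|$ yields \eqref{eqn:coern} with $C=1$.

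\textbf{Step 2 (Bound \eqref{eqn:coerdist}).} The constraint \eqref{eq:lenghtxi} gives
\begin{equation*}
1 - |\xi(\cdot,t)| \;\ge\; 1 - \max\{0,\, 1 - c\,\dist^2(\cdot,\partial\mathcal{A}(t))\} \;=\; \min\{1,\, c\,\dist^2(\cdot,\partial\mathcal{A}(t))\}.
\end{equation*}
Since $n_{A(t)}\cdot \xi \le |\xi|$, we obtain $1 - n_{A(t)}\cdot \xi \ge c'\min\{1,\dist^2(\cdot,\partial\mathcal{A}(t))\}$ pointwise with $c' = \min\{1,c\}$. Multiplying by $\sigma$ and integrating gives \eqref{eqn:coerdist}.

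\textbf{Step 3 (Bound \eqref{eqn:coervartheta}).} This step reduces to \eqref{eqn:coerdist} once we show a pointwise upper bound
\begin{equation*}
\vartheta^2(\cdot,t) \;\le\; C\,\min\{1,\dist^2(\cdot,\partial\mathcal{A}(t))\}.
\end{equation*}
For $x$ with $\dist(x,\partial\mathcal{A}(t))\le 1$, since $\vartheta(\cdot,t)$ is $C^1$ and vanishes on $\partial\mathcal{A}(t)$, the mean value theorem applied along the segment from $x$ to its nearest point projection gives $|\vartheta(x,t)|\le \|\nabla\vartheta\|_{L^\infty}\dist(x,\partial\mathcal{A}(t))$. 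For $x$ with $\dist(x,\partial\mathcal{A}(t))> 1$, the $L^\infty$ bound on $\vartheta$ yields $|\vartheta(x,t)|\le \|\vartheta\|_{L^\infty}\le \|\vartheta\|_{L^\infty}\cdot 1$. Taking $C=\max\{\|\nabla\vartheta\|_{L^\infty}^2,\|\vartheta\|_{L^\infty}^2\}$ and squaring gives the claimed pointwise bound. Combining with \eqref{eqn:coerdist} then yields \eqref{eqn:coervartheta}.

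\textbf{Expected obstacle.} None of the three steps is serious; the only mild subtlety is the upper bound $|\vartheta|\le C\min\{1,\dist(\cdot,\partial\mathcal{A}(t))\}$, which is not stated explicitly in Definition \ref{def:gradflowcal} but follows for free from the $C^1$-regularity and the vanishing of $\vartheta$ on $\partial\mathcal{A}(t)$ (together with the $L^\infty$-bound away from the interface). Uniformity in $t\in[0,T]$ is automatic since the relevant norms of $\xi,\vartheta$ are finite by item (i) of Definition \ref{def:gradflowcal}.
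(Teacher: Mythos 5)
Your proof is correct and follows essentially the same approach as the paper's: the same algebraic identity $2(1-\xi\cdot n_{A(t)})=|n_{A(t)}-\xi|^2+(1-|\xi|^2)$ combined with the length constraint \eqref{eq:lenghtxi} for the first two bounds, and the Lipschitz-type bound $|\vartheta|\le C\min\{1,\dist(\cdot,\partial\mathcal{A}(t))\}$ (coming from $C^1$-regularity, vanishing on the interface, and $L^\infty$-boundedness) for the third. You simply spell out the pointwise estimates in more detail than the paper does.
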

\begin{proof}
    Using the identity $2(1- \xi \cdot n_{A(t)} )= | n_{A(t)}- \xi |^2 + 1- |\xi|^2  $, where the second term is nonnegative due to \eqref{eq:lenghtxi}, the first two coercivity properties \eqref{eqn:coern}-\eqref{eqn:coerdist} directly follows. The third coercivity property \eqref{eqn:coervartheta} can then be deduced due to the Lipschitz continuity of the transported {mass} function $\vartheta$. 
\end{proof}

\subsection{Proof of Theorem \ref{theo:weakstrong}: Weak-strong uniqueness for $BV$ solutions to weighted MCF}
{We now establish a weak-strong uniqueness principle for {$BV$} solutions of weighted mean curvature flow.
As mentioned, our proof follows the lines of that for (standard) mean curvature flow \cite{FischerHenselLauxSimon} (see also \cite{HenselLaux-contact,Laux-volume}).} However, the presence of a spatially dependent surface tension {introduces additional terms that must be accounted for in} the {computations.}

\begin{proof}
We divide the proof into {three} parts. First we prove quantitative stability for the relative energy functional \eqref{eqn:relen}, then for the bulk energy functional \eqref{eqn:bulken}. {Finally,} the weak-strong uniqueness principle follows from the stability estimates \eqref{eqn:relenestimate} and \eqref{eqn:bulkenestimate} by means of a Gronwall argument.

\textit{a) Proof of the relative energy estimate \eqref{eqn:relenestimate}:}
	
	\textit{Step 1 (First manipulations).} Using \eqref{eqn:transport} and \eqref{eqn:optimaldissip}, we compute
	\begin{align*}
		&	E_{\text{rel}}[u_A|u_{\mathcal{A}} ](T') - 	E_{\text{rel}}[u_A|u_{\mathcal{A}} ](0) \\
   &\leq - \int_{0}^{T'}\int_{\R^N} \sigma |V|^2 \, \de |\nabla \chi_{A(t)}|  \de t  - \int_{0}^{T'}\int_{\R^N}   V (\sigma \nabla \cdot \xi +  \nabla \sigma \cdot \xi ) \, \de |\nabla \chi_{A(t)}|  \de t \\
			&\quad  -  \int_{0}^{T'}\int_{\R^N} \sigma \partial_t \xi \cdot n_{A(t)}  \, \de |\nabla \chi_{A(t)}|  \de t .
	\end{align*}
Adding \eqref{eqn:motionlaw} {integrated in time} with $B$ as a test function, we obtain
	\begin{align}
	&  E_{\text{rel}}[u_A|u_{\mathcal{A}} ](T') - 	E_{\text{rel}}[u_A|u_{\mathcal{A}} ](0)   \notag \\
&	\leq - \int_{0}^{T'}\int_{\R^N} \sigma |V|^2 \, \de |\nabla \chi_{A(t)}| \de t   	- \int_{0}^{T'}\int_{\R^N}   V (\sigma \nabla \cdot \xi +  \nabla \sigma \cdot \xi ) \, \de |\nabla \chi_{A(t)}| \de t \notag \\
& \quad + 	\int_{0}^{T'} \int_{\R^N} B \cdot  n_{A(t)}  \sigma V  \, \de |\nabla \chi_{A(t)}| \de t
+ \int_{0}^{T'} \int_{\R^N} \nabla \sigma \cdot B \, \de |\nabla \chi_{A(t)}|  \de t   
			 \notag \\
&\quad + \int_{0}^{T'} \int_{\R^N} \sigma (\operatorname{Id} -  n_{A(t)} \otimes  n_{A(t)} ): \nabla B \, \de |\nabla \chi_{A(t)}|  \de t
-  \int_{\R^N} \sigma \partial_t \xi \cdot n_{A(t)}  \, \de |\nabla \chi_{A(t)}|  \de t
.  \label{eqn:evErel}
\end{align}

	\textit{Step 2 (Dissipation estimates).} Consider the first three terms on the right hand side in the inequality \eqref{eqn:evErel}. By adding zeros to complete the squares, we obtain
	\begin{align*}
	&	- \int_{0}^{T'} \int_{\R^N} \sigma |V|^2 \, \de |\nabla \chi_{A(t)}| \de t  -  \int_{0}^{T'}\int_{\R^N}   V (\sigma \nabla \cdot \xi +  \nabla \sigma \cdot \xi ) \, \de |\nabla \chi_{A(t)}| \de t  	 \\
 &+ 	\int_{0}^{T'} \int_{\R^N} B \cdot  n_{A(t)}  \sigma V  \, {d}|\nabla \chi_{A(t)}|  \de t\\
	&= -  \int_{0}^{T'}\int_{\R^N} \sigma {\frac12} |V +  \nabla \cdot \xi +  \nabla \log \sigma \cdot \xi |^2 \, \de |\nabla \chi_{A(t)}| \de t
	-   \int_{0}^{T'}\int_{\R^N} \sigma {\frac12} |V  n_{A(t)}   - B|^2  \, \de |\nabla \chi_{A(t)}| \de t\\
	& \quad  +  \int_{0}^{T'}\int_{\R^N} \sigma \frac12  |\nabla \cdot \xi +  \nabla \log \sigma \cdot \xi |^2 \, \de |\nabla \chi_{A(t)}| \de t  +  \int_{0}^{T'}\int_{\R^N} \sigma \frac12 |B|^2   \, \de |\nabla \chi_{A(t)}| \de t. 
	\end{align*}
	
	\textit{Step 3 (Further manipulations).} Consider the last line on the right{-}hand side in the inequality \eqref{eqn:evErel}. 
 {Thinking of \eqref{eqn:evxi} and \eqref{eqn:evxi2}, we add zeros and rearrange the terms to obtain}
	\begin{align}
	&-   \int_{0}^{T'}\int_{\R^N} \sigma \partial_t \xi \cdot n_{A(t)}  \, \de |\nabla \chi_{A(t)}|  \de t +	\int_{0}^{T'} \int_{\R^N} \sigma (\operatorname{Id} -  n_{A(t)} \otimes  n_{A(t)} ): \nabla B \, \de |\nabla \chi_{A(t)}|  \de t \notag \\ 
	&	= -   \int_{0}^{T'}\int_{\R^N} \sigma( \partial_t \xi + (B \cdot \nabla ) \xi + (\nabla B)^\mathsf{T}  \xi )\cdot (n_{A(t)}- \xi   )\, \de |\nabla \chi_{A(t)}| \de t 
\notag	\\
	 & \quad  -   \int_{0}^{T'}\int_{\R^N} \sigma( \partial_t \xi + (B \cdot \nabla ) \xi  )\cdot \xi   \, \de |\nabla \chi_{A(t)}|  \de t  + \int_{0}^{T'} \int_{\R^N} \sigma (\nabla \cdot B) (1 - n_{A(t)}  \cdot \xi ) \, \de |\nabla \chi_{A(t)}|  \de t  \notag \\
	& \quad
	  {+}   \int_{0}^{T'}\int_{\R^N} \sigma( (\nabla B)^\mathsf{T}  \xi )\cdot ({n_{A(t)} - \xi}  )  \, \de |\nabla \chi_{A(t)}|  \de t 
	   -\int_{0}^{T'} \int_{\R^N} \sigma (  n_{A(t)} \otimes  n_{A(t)} ): \nabla B \, \de |\nabla \chi_{A(t)}|  \de t \notag \\
	  & \quad 
	 +  \int_{0}^{T'} \int_{\R^N} \sigma(  (B \cdot \nabla ) \xi )\cdot n_{A(t)}  \, \de {|\nabla \chi_{A(t)}|} \de t  + \int_{0}^{T'} \int_{\R^N} \sigma (\nabla \cdot B) (  n_{A(t)}  \cdot \xi ) \, \de |\nabla \chi_{A(t)}|  \de t   . \label{eq:step3ws}
	\end{align}
Note that, using the evolution equations \eqref{eqn:evxi} and \eqref{eqn:evxi2} together with the coercivity properties \eqref{eqn:coern}-\eqref{eqn:coerdist}, the first three terms in the right hand side of \eqref{eq:step3ws} can be bounded by the {(time integrated)} relative energy functional {as in \eqref{eqn:relenestimate}}.
Therefore, we need to control the two last lines in \eqref{eq:step3ws}. To process the last line, we use the equality $\nabla \cdot \nabla \cdot (\xi \otimes \sigma B) =\nabla \cdot \nabla \cdot (\sigma B\otimes \xi)$ 
and Gauss’ theorem, obtaining
\begin{align*}
	0 & = - \int_{\R^N}  \chi_{A(t)} \nabla \cdot (\nabla \cdot (\sigma B \otimes \xi - \xi \otimes \sigma B)) \, \de x \\
	&= \int_{\R^N} n_{A(t)}  \cdot (\nabla \cdot (  \sigma B \otimes \xi - \xi \otimes  \sigma B)) \, \de |\nabla \chi_{A(t)}| \\
	& =  \int_{\R^N} \sigma n_{A(t)}  \cdot (\nabla \cdot (  B \otimes \xi - \xi \otimes  B)) \, \de |\nabla \chi_{A(t)}| \\
	& \quad +  \int_{\R^N} (n_{A(t)}  \cdot B) (\nabla \sigma \cdot \xi )  \, \de |\nabla \chi_{A(t)}| 
		-   \int_{\R^N} (n_{A(t)}  \cdot \xi) (\nabla \sigma \cdot B ) \,  \de |\nabla \chi_{A(t)}| , 
\end{align*}
whence we can deduce
\begin{align*}
	& \int_{0}^{T'} \int_{\R^N} \sigma(  (B \cdot \nabla ) \xi )\cdot n_{A(t)}  \, \de |\nabla \chi_{A(t)}|  \de t  + \int_{0}^{T'} \int_{\R^N} \sigma (\nabla \cdot B) (  n_{A(t)}  \cdot \xi ) \, \de |\nabla \chi_{A(t)}|  \de t \\
	&=  \int_{0}^{T'}\int_{\R^N} \sigma(  (\xi \cdot \nabla ) B)\cdot n_{A(t)}  \, \de |\nabla \chi_{A(t)}|  \de t  + \int_{0}^{T'} \int_{\R^N} \sigma (\nabla \cdot \xi ) (  n_{A(t)}  \cdot B) \, \de |\nabla \chi_{A(t)}|  \de t \\
	& \quad +  \int_{0}^{T'} \int_{\R^N} (n_{A(t)}  \cdot B) (\nabla \sigma \cdot \xi )  \, \de |\nabla \chi_{A(t)}| \de t 
	-   \int_{0}^{T'}\int_{\R^N} (n_{A(t)}  \cdot \xi) (\nabla \sigma \cdot B ) \, \de |\nabla \chi_{A(t)}| \de t . 
\end{align*}
Hence, combining the terms from the {last} two lines of \eqref{eq:step3ws}, we obtain
\begin{align*}
	& 
	-\int_{0}^{T'} \int_{\R^N} \sigma (  (n_{A(t)} - \xi )\otimes  (n_{A(t)} - \xi )): \nabla B \, \de |\nabla \chi_{A(t)}| \de t\\
&  +  \int_{0}^{T'}\int_{\R^N} (n_{A(t)}  \cdot B) (\nabla \sigma \cdot \xi  + \sigma (\nabla \cdot \xi )) \,  \de |\nabla \chi_{A(t)}|  \de t 
-  \int_{0}^{T'} \int_{\R^N} (n_{A(t)}  \cdot \xi) (\nabla \sigma \cdot B ) \, \de |\nabla \chi_{A(t)}|  \de t ,
\end{align*}
where the first term can be bounded by the relative energy functional \eqref{eqn:relen} due to the coercivity property \eqref{eqn:coern}.

\textit{Step 4 (Final estimates).}
From the previous steps we deduce
\begingroup
\allowdisplaybreaks
\begin{align} \notag
	&E_{\text{rel}}[u_A|u_{\mathcal{A}} ](T') - 	E_{\text{rel}}[u_A|u_{\mathcal{A}} ](0)\notag  \\
&+	\int_{0}^{T'}\int_{\R^N} \sigma {\frac12}|V +  \nabla \cdot \xi +  \nabla \log \sigma \cdot \xi |^2 \, \de |\nabla \chi_{A(t)}| \de t
	+ \int_{0}^{T'}\int_{\R^N} \sigma {\frac12}|V  n_{A(t)}   - {(B\cdot \xi) \xi } |^2  \, \de |\nabla \chi_{A(t)}| \de t \notag \\
	&\leq  C\int_{0}^{T'}  E_{\text{rel}}[u_A|u_{\mathcal{A}} ](t)\, \de t {\, + 	\int_{0}^{T'} \int_{\R^N} (B- (B \cdot \xi ) \xi ) \cdot  n_{A(t)}  \sigma V  \, {d}|\nabla \chi_{A(t)}|  \de t }  \notag \\
	&\quad  +\int_{0}^{T'} \int_{\R^N} \sigma \frac12  |\nabla \cdot \xi +  \nabla \log \sigma \cdot \xi |^2 \, \de |\nabla \chi_{A(t)}| \de t  + \int_{0}^{T'} \int_{\R^N} \sigma \frac12 |{(B \cdot \xi) \xi }|^2   \, \de |\nabla \chi_{A(t)}| \de t \notag \\
	& \quad  + \int_{0}^{T'} \int_{{\R^N}} (n_{A(t)}  \cdot B) (\nabla \sigma \cdot \xi  + \sigma (\nabla \cdot \xi )) \,  \mathrm{d}|\nabla \chi_{A(t)}| \de t  + \int_{0}^{T'} \int_{\R^N} (\nabla \sigma \cdot B)  (1- n_{A(t)}  \cdot \xi)\, \mathrm{d}|\nabla \chi_{A(t)}|  \de t   
	\notag \\
		&\leq  C\int_{0}^{T'}  E_{\text{rel}}  [u_A|u_{\mathcal{A}} ](t) \, \de t \notag \\
	&\quad  +\int_{0}^{T'} \int_{\R^N} \sigma \frac12  |\nabla \cdot \xi +  \nabla \log \sigma \cdot \xi + B \cdot \xi |^2 \, \de |\nabla \chi_{A(t)}| \de t 
+\int_{0}^{T'} \int_{\R^N} \sigma \frac12 {|B\cdot \xi|^2(|\xi|^2 - 1)}   \, \de |\nabla \chi_{A(t)}| \de t \notag \\
	& \quad  + {\int_{0}^{T'} \int_{\R^N} \sigma  (n_{A(t)}\cdot (\operatorname{Id} - \xi \otimes \xi ) B)(V + \nabla \log \sigma \cdot \xi  + \nabla \cdot \xi )  \, \de |\nabla \chi_{A(t)}| } \de t \notag \\
	&\quad 
    {-  \int_{0}^{T'} \int_{\R^N} (1- {n_{{A(t)}} \cdot \xi})( \xi \cdot B) (\nabla \sigma \cdot \xi  + \sigma (\nabla \cdot \xi ))  \, \de |\nabla \chi_{A(t)}|  \de t}  \notag \\
	&\quad 
     + \int_{0}^{T'} \int_{{\R^N}} (\nabla \sigma \cdot B)  (1- n_{A(t)}  \cdot \xi)\, \mathrm{d}|\nabla \chi_{A(t)}| \de t   . \label{eq:relenineq} 
\end{align}
\endgroup
On the right{-}hand side of \eqref{eq:relenineq}, the second line and the last two lines can be bounded by the relative energy functional \eqref{eqn:relen} due to the geometric equation \eqref{eq:geomB}, the length condition \eqref{eq:lenghtxi},
 together with the coercivity property \eqref{eqn:coerdist},
  and the regularity of $\sigma$, $\xi$, and $B$. 
In order to estimate the third line, we use Young's inequality and {absorb the subsequent integrand $\delta |V +  \nabla \cdot \xi +  \nabla \log \sigma \cdot \xi |^2$, with $0<\delta\ll 1$, into} the first dissipation term on the left{-}hand side. {For the other term coming from Young's inequality, we compute}
\begin{align*}
&|n_{A(t)}\cdot (\operatorname{Id} - \xi \otimes \xi ) |^2 
\leq {2}|n_{A(t)} - \xi|^2 + 2 ( 1-n_{A(t)}  \cdot \xi ) ,
\end{align*}
where we used the length condition 
 \eqref{eq:lenghtxi}. 
Hence, the remaining {term (coming from Young's inequality) on} the right-hand side of \eqref{eq:relenineq} is bounded by the {time integrated} relative energy functional \eqref{eqn:relen} due to the coercivity property \eqref{eqn:coern}.
{Altogether, this proves \eqref{eqn:relenestimate}.}

\textit{b) Proof of the bulk energy estimate \eqref{eqn:bulkenestimate}.}

Using the equation \eqref{eqn:transport} with $\vartheta$ as test function, the analogue of \eqref{eqn:transport} for the smooth flow $\mathcal{A}(t)$, and the fact that $\vartheta = 0$ on $\partial \mathcal A(t)$, we deduce
\begin{align}
		E_{\text{bulk}}[u_A|u_{\mathcal{A}} ](T') - E_{\text{bulk}}[u_A|u_{\mathcal{A}} ](0) =  \int_{0}^{T'}\int_{\R^N} \sigma   ( \chi_{\mathcal A(t)}- \chi_{A(t)}) \partial_t \vartheta \, \de x
		+ \int_{0}^{T'}\int_{\R^N } \sigma \vartheta  V  \, \de |\nabla \chi_{A(t)}|  \de t .
\end{align}
Moreover, we compute
\begin{align*}
	\int_{\R^N} \sigma   ( \chi_{\mathcal A(t)}- \chi_{A(t)}) (B \cdot \nabla )\vartheta \, \de x = 
&	 \int_{\R^N} \vartheta B \cdot n_{A(t)}  \, \de |\nabla \chi_{A(t)}|  - \int_{\R^N} (\nabla \sigma \cdot B) \vartheta   ( \chi_{\mathcal A(t)}- \chi_{A(t)}) \, \de x 		\\
&-	\int_{\R^N} \sigma \vartheta (\nabla \cdot B )   ( \chi_{\mathcal A(t)}- \chi_{A(t)})\, \de x . 
\end{align*}
Hence, by adding a zero, we obtain 
\begin{align*}
	& E_{\text{bulk}}[u_A|u_{\mathcal{A}} ](T') - E_{\text{bulk}}[u_A|u_{\mathcal{A}} ](0) \\
	&=  \int_{0}^{T'}\int_{\R^N} \sigma    ( \chi_{\mathcal A(t)}- \chi_{A(t)}) (\partial_t \vartheta + (B \cdot\nabla ) \vartheta) \, \de x +\int_{0}^{T'}\int_{\R^N } \sigma \vartheta ( V - B \cdot n_{A(t)} ) \, \de |\nabla \chi_{A(t)}|  \de t \\
&  \quad + \int_{0}^{T'} \int_{\R^N} (\nabla \sigma \cdot B+ \sigma \nabla \cdot B ) \vartheta    ( \chi_{\mathcal A(t)}- \chi_{A(t)}) \, \de x\de t  .
\end{align*}
The last integral can be directly bounded by the bulk energy functional \eqref{eqn:bulken} due to the regularity of $B$ and $\vartheta$.
Using the evolution equation \eqref{eqn:evvartheta} and the coercivity of $\vartheta$ given by \eqref{eqn:coervartheta}, the first term can {also} be bounded by the bulk energy functional \eqref{eqn:bulken}. 
{The second term is more complicated: Precisely, note that the left-hand side of \eqref{eq:relenineq} (which includes positive dissipation terms) is controlled from above by $C\int_0^{T'}E_{\rm rel}[u_A|u_{\mathcal{A}}]\, \de t$. Consequently, we use this information with Young’s inequality and the coercivity property \eqref{eqn:coervartheta} satisfied by the relative energy functional \eqref{eqn:relen} to find that 
$$ E_{\rm rel}[u_A|u_{\mathcal{A}}](T')+\int_{0}^{T'}\int_{\R^N } \sigma \vartheta ( V - B \cdot n_{A(t)} ) \, \de |\nabla \chi_{A(t)}|  \de t\leq E_{\rm rel}[u_A|u_{\mathcal{A}}](0) + C\int_0^{T'}E_{\rm rel}[u_A|u_{\mathcal{A}}]\, \de t.$$ 
Synthesizing the above inequalities we obtain \eqref{eqn:bulkenestimate}.}

{\textit{c) Weak-strong uniqueness.} }

{Adding \eqref{eqn:relenestimate} and \eqref{eqn:bulkenestimate}, we see that if $E_{\rm rel}[u_A|u_{\mathcal{A}}](0) + E_{\text{bulk}}[u_A|u_{\mathcal{A}} ](0) = 0$, then Gronwall's inequality implies $E_{\text{bulk}}[u_A|u_{\mathcal{A}} ](t) = 0$ for almost every $t\in [0,T)$. By construction of the bulk energy, this only happens if $A(t) = \mathcal{A}(t)$ for almost every $t\in [0,T)$, thereby recovering uniqueness of the flow.}
\end{proof}

\appendix
\section{Existence of Solutions to the Allen--Cahn equation}\label{sec:ACexist}

We discuss the existence of solutions of the heterogeneous Allen--Cahn equation \eqref{eqn:AChetero}, where we fix $\eps >0$ and assume for notational simplicity that the initial condition is $u_0\in H^1(\Omega)$. We only sketch the main ideas and refer to \cite[Lemmas 6-8]{HenselMoser} and \cite[Section 3]{LauxStinsonUllrich22} for complete proofs in related settings. With $E_\eps$ defined in \eqref{eqn:freeEnergy} and time-step $h>0,$ we introduce the minimizing movements scheme
\begin{equation}\label{eqn:minMovScheme}
u_i  \in \operatorname{argmin}_{u\in H^1(\Omega)} \Big\{\frac{1}{\eps}E_\eps [u]+\frac{1}{2h}\|u-u_{i-1}\|^2_{L^2(\Omega)} \Big\} \quad \text{ for } i = 1,\ldots \lceil T/h\rceil. 
\end{equation}
Note that if
\begin{equation}\label{eqn:monotonicity assumption}
\text{there is $C>0$ such that for all $x\in \Omega$, $W(x,u)$ is increasing for $u>C$ and decreasing for $u<-C$,}
\end{equation}  
then $\|u_i\|_{L^\infty(\Omega)}\leq C_0:=\max\{\|u_0\|, C\}$ for all $i>0$; to see this, compare $u_i$ with $\max\{\min\{u_i,C_0\},-C_0\}.$ Using Aubin--Lions--Simon type compactness results, it is direct to show that the piecewise constant and linear interpolations of $\{u_i\}_{i=0}^{\lceil T/h\rceil}$ converge to a function $u$ satisfying {items} (1) and (2) of Definition \ref{def:weakACsoln}. We remark that the fact $u \in L^2((0,T);H^2(\Omega))$ follows from elliptic regularity applied to the Neumann problem 
\begin{equation}\nonumber
\left\{\begin{aligned}
\Delta u & = \partial_t u + \frac{1}{\eps^2}\partial_u W(x,u)  && \text{ in }\Omega,\\
\nabla u\cdot n_{\Omega} & = 0  && \text{ on }\partial \Omega,
\end{aligned}\right.
\end{equation} at each time. Recovery of the energy dissipation equality of (3) in Definition \ref{def:weakACsoln} is more technical. Two standard options exist. Either one proves enough regularity for the solution so that the informal calculation 
$$\frac{\de}{\de t}E_\eps[u] = -\int_\Omega \eps|\partial_t u|^2\de x $$
becomes rigorous, or one supposes that $W$ can be perturbed to a convex function. We refer to proofs of optimal dissipation relations for the standard Allen--Cahn via regularity in \cite[Lemma 8]{HenselMoser} and for the anisotropic Allen--Cahn assuming that $W$ is $\lambda$-convex in \cite[Theorem 3.5]{LauxStinsonUllrich22}. Both of these strategies work in our setting since the sequence constructed above in \eqref{eqn:minMovScheme} is bounded in $L^\infty(\Omega)$ and $W\in C^2(\overline{\Omega}\times \R)$. Ultimately one recovers the desired existence result:

\begin{theorem}\label{thm:ACexist}
Let $N\geq 2$, $\Omega\subset \R^N$ be a $C^2$-domain, $W\in C^2(\overline{\Omega}\times \R)$ satisfy \eqref{eqn:monotonicity assumption}, and let $u_{0}\in H^1(\Omega)$. For $\eps>0$ fixed, there exists a weak solution $u$ of the heterogeneous Allen--Cahn equation \eqref{eqn:AChetero} with initial condition $u_0$ in the sense of Definition \ref{def:weakACsoln} such that in addition
\begin{equation}\nonumber
\|u_\eps\|_{L^\infty(\Omega\times (0,T))}\leq C_0 <\infty
\end{equation}
for a constant $C_0$ depending only on $\|u_0\|_{L^\infty(\Omega)}$ and $W$ (independent of $\eps>0$).
\end{theorem}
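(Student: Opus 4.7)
My plan follows the outline sketched in the paragraphs preceding the theorem, implementing the minimizing movements scheme \eqref{eqn:minMovScheme} and then extracting convergence. First, I would fix $h>0$ and iteratively construct $\{u_i\}_{i=0}^{\lceil T/h\rceil}$ as minimizers of the De Giorgi functional. Existence at each step follows from the direct method in the calculus of variations: the functional is coercive in $H^1(\Omega)$ (using the $L^2$-coercivity \eqref{eqn:L2 growth}), weakly lower semicontinuous, and bounded below. To obtain the uniform $L^\infty$-bound, I would use the truncation argument indicated: if $u_i$ is a minimizer, then truncating at $\pm C_0 := \max\{\|u_0\|_{L^\infty},C\}$ (with $C$ from \eqref{eqn:monotonicity assumption}) strictly decreases the energy unless the truncation is trivial, so inductively $\|u_i\|_{L^\infty(\Omega)} \leq C_0$.

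Next, I would introduce the piecewise-constant interpolation $\bar u^h(t) := u_{\lceil t/h\rceil}$ and the piecewise-linear interpolation $\hat u^h(t)$. The Euler--Lagrange equation for \eqref{eqn:minMovScheme} gives the discrete evolution law and, by a telescoping sum, the discrete dissipation inequality
\begin{equation*}
E_\eps[\bar u^h(T')] + \int_0^{T'}\!\!\int_\Omega \eps |\partial_t \hat u^h|^2\,\de x\,\de t \leq E_\eps[u_0].
\end{equation*}
This yields uniform bounds on $\hat u^h$ in $H^1((0,T);L^2)$, on $\bar u^h$ in $L^\infty((0,T);H^1)$, and together with the uniform $L^\infty$-bound gives control over the nonlinearity $\partial_u W(x,\bar u^h)$. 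Applying Aubin--Lions--Simon compactness, I extract a subsequence converging to some $u$ in $C([0,T];L^2(\Omega))$, with $\partial_t\hat u^h\rightharpoonup \partial_t u$ in $L^2$ and $\bar u^h \to u$ a.e. Passing to the limit in the Euler--Lagrange equation produces the weak formulation in item (2) of Definition \ref{def:weakACsoln}, and elliptic regularity applied pointwise in time (via the Neumann problem $\Delta u = \partial_t u + \eps^{-2}\partial_u W(x,u)$) promotes $u$ to $L^2((0,T);H^2(\Omega))$.

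The main obstacle is the sharp dissipation \emph{equality} \eqref{eqn.dissipACen}, since the discrete scheme naturally only furnishes an inequality. I would follow the first option suggested: leverage the $L^2_tH^2_x$-regularity and the $L^\infty$-bound to justify the chain rule
\begin{equation*}
\frac{\de}{\de t}E_\eps[u(\cdot,t)] = \int_\Omega \Big(\frac{1}{\eps}\partial_u W(x,u) - \eps \Delta u\Big)\partial_t u\,\de x = -\int_\Omega \eps|\partial_t u|^2\,\de x,
\end{equation*}
where the second equality uses the PDE. The delicate point is verifying that $t\mapsto E_\eps[u(\cdot,t)]$ is absolutely continuous, which requires a density argument approximating $u$ by smooth-in-time functions so that the chain rule applies to each term of $E_\eps$; the $L^\infty$-bound together with $W\in C^2$ makes the potential term a locally Lipschitz superposition operator on $L^2$, handling that term, while the gradient term is standard. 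Integrating in time then yields the equality, and lower semicontinuity combined with the discrete upper bound shows that the dissipation inequality is saturated in the limit. I would cite \cite[Lemma 8]{HenselMoser} for the fully analogous argument in the homogeneous case, noting that the spatial dependence of $W$ introduces only bounded $C^1$ factors that do not affect the chain-rule justification.
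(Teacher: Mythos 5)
Your proposal follows essentially the same route as the paper's own sketch: the minimizing movements scheme \eqref{eqn:minMovScheme}, the truncation comparison with $\max\{\min\{u_i,C_0\},-C_0\}$ for the uniform $L^\infty$ bound, Aubin--Lions--Simon compactness for the interpolants, elliptic regularity for the Neumann problem to upgrade to $L^2_t H^2_x$, and the regularity-based chain-rule argument (the paper's first of two suggested options, citing \cite[Lemma 8]{HenselMoser}) to promote the discrete dissipation inequality to the optimal dissipation equality. Your write-up is somewhat more detailed than the paper's sketch but introduces no new ideas and has no gaps relative to it.
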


\section*{Acknowledgements}
L.G. was funded by the Deutsche Forschungsgemeinschaft–320021702/GRK2326– Energy, Entropy, and Dissipative Dynamics (EDDy). A.M. and K.S. were supported by funding from the Deutsche Forschungsgemeinschaft (DFG, German Research Foundation) under Germany’s Excellence Strategy – EXC-2047/1 – 390685813 and the DFG project 211504053 - SFB 1060. K.S. was also supported by funding from the NSF (USA) RTG grant DMS-2136198. 

\bibliographystyle{siam}
\bibliography{GT}

\begin{thebibliography}{10}

\bibitem{AmbrosioFuscoPallara}
{\sc L.~Ambrosio, N.~Fusco, and D.~Pallara}, {\em Functions of bounded
  variation and free discontinuity problems}, Oxford Mathematical Monographs.
  Oxford University Press, New York, USA, 2000.

\bibitem{Bouchitte}
{\sc G.~Bouchitt{\'e}}, {\em Singular perturbations of variational problems
  arising from a two-phase transition model}, Applied mathematics \&
  optimization, 21 (1990), pp.~289--314.

\bibitem{Brakke}
{\sc K.~A. Brakke}, {\em The motion of a surface by its mean curvature},
  vol.~20 of Mathematical Notes, Princeton University Press, Princeton, NJ,
  1978.

\bibitem{BungertLauxStinson}
{\sc L.~Bungert, T.~Laux, and K.~Stinson}, {\em A mean curvature flow arising
  in adversarial training}, Journal de Mathématiques Pures et Appliquées, 192
  (2024), p.~103625.

\bibitem{ChambolleDeGennaroMorini}
{\sc A.~Chambolle, D.~D. Gennaro, and M.~Morini}, {\em Minimizing movements for
  anisotropic and inhomogeneous mean curvature flows}, Advances in Calculus of
  Variations, 17 (2024), pp.~1095--1129.

\bibitem{Chen}
{\sc X.~Chen}, {\em Generation and propagation of interfaces for
  reaction-diffusion equations}, Journal of Differential Equations, 96 (1992),
  pp.~116--141.

\bibitem{ChenGigaGoto}
{\sc Y.~G. Chen, Y.~Giga, and S.~Goto}, {\em Uniqueness and existence of
  viscosity solutions of generalized mean curvature flow equations}, Journal of
  Differential Geometry, 33 (1991), pp.~749--786.

\bibitem{CicaleseNagasePisante}
{\sc M.~Cicalese, Y.~Nagase, and G.~Pisante}, {\em The {G}ibbs–{T}homson
  relation for non homogeneous anisotropic phase transitions}, Advances in
  Calculus of Variations, 3 (2010), pp.~321--344.

\bibitem{cristoferi2023homogenization}
{\sc R.~Cristoferi, I.~Fonseca, and L.~Ganedi}, {\em Homogenization and phase
  separation with fixed wells--the supercritical case}, preprint,  (2023).
\newblock arXiv:2301.07012.

\bibitem{cristoferifonsecaganedi}
\leavevmode\vrule height 2pt depth -1.6pt width 23pt, {\em Homogenization and
  phase separation with space dependent wells: The subcritical case}, Archive
  for Rational Mechanics and Analysis, 247 (2023), p.~94.

\bibitem{cristoferi_homogenization_2019}
{\sc R.~Cristoferi, I.~Fonseca, A.~Hagerty, and C.~Popovici}, {\em A
  homogenization result in the gradient theory of phase transitions},
  Interfaces and Free Boundaries,  (2019).

\bibitem{cristoferigravina}
{\sc R.~Cristoferi and G.~Gravina}, {\em Sharp interface limit of a multi-phase
  transitions model under nonisothermal conditions}, Calculus of Variations and
  Partial Differential Equations, 60 (2021), p.~142.

\bibitem{DeMottoniSchatzman}
{\sc P.~de~Mottoni and M.~Schatzman}, {\em Geometrical evolution of developed
  interfaces}, Transactions of the American Mathematical Society, 347 (1995),
  pp.~1533--1589.

\bibitem{EvansSonerSouganidis}
{\sc L.~C. Evans, H.~M. Soner, and P.~E. Souganidis}, {\em Phase transitions
  and generalized motion by mean curvature}, Communications on Pure and Applied
  Mathematics, 45 (1992), pp.~1097--1123.

\bibitem{EvansSpruck}
{\sc L.~C. Evans and J.~Spruck}, {\em Motion of level sets by mean curvature
  {I}}, Journal of Differential Geometry, 5 (1991), pp.~635--681.

\bibitem{FeldmanMorfe2023}
{\sc W.~M. Feldman and P.~Morfe}, {\em The occurrence of surface tension
  gradient discontinuities and zero mobility for {A}llen-{C}ahn and curvature
  flows in periodic media}, Interfaces and Free Boundaries, 25 (2023).

\bibitem{FischerHensel}
{\sc J.~Fischer and S.~Hensel}, {\em Weak–strong uniqueness for the
  navier–stokes equation for two fluids with surface tension}, Archive for
  Rational Mechanics and Analysis, 236 (2019), pp.~967--1087.

\bibitem{FischerHenselLauxSimon}
{\sc J.~Fischer, S.~Hensel, T.~Laux, and T.~Simon}, {\em The local structure of
  the energy landscape in multiphase mean curvature flow: Weak-strong
  uniqueness and stability of evolutions}, first part accepted for publication
  at Journal of the European Mathematical Society,  (2023), p.~55.
\newblock arXiv:2003.05478.

\bibitem{FischerHenselMarveggioMoser}
{\sc J.~Fischer, S.~Hensel, A.~Marveggio, and M.~Moser}, {\em Stability of
  multiphase mean curvature flow beyond circular topology changes}, Preprint,
  (2024), p.~56.
\newblock arXiv:2404.02884.

\bibitem{FischerLauxSimon}
{\sc J.~Fischer, T.~Laux, and T.~M. Simon}, {\em Convergence rates of the
  {A}llen--{C}ahn equation to mean curvature flow: A short proof based on
  relative entropies}, SIAM Journal on Mathematical Analysis, 52 (2020),
  pp.~6222--6233.

\bibitem{FischerMarveggio}
{\sc J.~Fischer and A.~Marveggio}, {\em Quantitative convergence of the
  vectorial {A}llen-{C}ahn equation towards multiphase mean curvature flow},
  Annales de l'Institut Henri Poincar\'{e} C. Analyse Non Lin\'{e}aire, 41
  (2024), pp.~1117--1178.

\bibitem{GurtinConjecture}
{\sc M.~Gurtin}, {\em Some Results and Conjectures in the Gradient Theory of
  Phase Transitions}, vol.~3, Springer, New York, NY, 1987.

\bibitem{HenselLaux-bubble}
{\sc S.~Hensel and T.~Laux}, {\em Weak-strong uniqueness for the mean curvature
  flow of double bubbles}, Interfaces and Free Boundaries, 25 (2023),
  pp.~37--107.

\bibitem{HenselLaux-contact}
\leavevmode\vrule height 2pt depth -1.6pt width 23pt, {\em {BV} solutions for
  mean curvature flow with constant angle: {A}llen-{C}ahn approximation and
  weak-strong uniqueness}, Indiana University Mathematics Journal, 73 (2024),
  pp.~111--148.

\bibitem{HenselLaux-varifold}
\leavevmode\vrule height 2pt depth -1.6pt width 23pt, {\em A new varifold
  solution concept for mean curvature flow: Convergence of the {A}llen-{C}ahn
  equation and weak-strong uniqueness}, accepted for publication at Journal of
  Differential Geometry,  (2024), p.~40.
\newblock arXiv:2109.04233.

\bibitem{HenselMarveggio}
{\sc S.~Hensel and A.~Marveggio}, {\em Weak-strong uniqueness for the
  {N}avier-{S}tokes equation for two fluids with ninety degree contact angle
  and same viscosities}, Journal of Mathematical Fluid Mechanics, 24 (2022).

\bibitem{HenselMoser}
{\sc S.~Hensel and M.~Moser}, {\em Convergence rates for the {A}llen–{C}ahn
  equation with boundary contact energy: the non-perturbative regime}, Calculus
  of Variations and Partial Differential Equations, 61 (2022), p.~201.

\bibitem{Ilmanen}
{\sc T.~Ilmanen}, {\em Convergence of the {A}llen-{C}ahn equation to {B}rakke's
  motion by mean curvature}, Journal of Differential Geometry, 38 (1993),
  pp.~417--461.

\bibitem{kimMelletWu2022}
{\sc I.~Kim, A.~Mellet, and Y.~Wu}, {\em Density-constrained chemotaxis and
  {H}ele-{S}haw flow}, Transactions of the American Mathematical Society, 377
  (2024), pp.~395--429.

\bibitem{KimTonegawa}
{\sc L.~Kim and Y.~Tonegawa}, {\em On the mean curvature flow of grain
  boundaries}, Annales de l'Institut Fourier, 67 (2017), pp.~43--142.

\bibitem{kroemer_Laux_2021}
{\sc M.~Kroemer and T.~Laux}, {\em The {Hele}–{Shaw} flow as the sharp
  interface limit of the {Cahn}–{Hilliard} equation with disparate
  mobilities}, Communications in Partial Differential Equations, 47 (2021),
  pp.~2444 -- 2486.

\bibitem{Laux-volume}
{\sc T.~Laux}, {\em Weak-strong uniqueness for volume-preserving mean curvature
  flow}, Revista Matem\'{a}tica Iberoamericana, 40 (2024), pp.~93--110.

\bibitem{LauxSimon}
{\sc T.~Laux and T.~M. Simon}, {\em Convergence of the allen-cahn equation to
  multiphase mean curvature flow}, Communications on Pure and Applied
  Mathematics, 71 (2018), pp.~1597--1647.

\bibitem{LauxStinsonUllrich22}
{\sc T.~Laux, K.~Stinson, and C.~Ullrich}, {\em Diffuse-interface approximation
  and weak-strong uniqueness of anisotropic mean curvature flow}, European
  Journal of Applied Mathematics,  (2024), pp.~1--61.

\bibitem{LuckhausModica}
{\sc S.~Luckhaus and L.~Modica}, {\em The {G}ibbs-{T}hompson relation within
  the gradient theory of phase transitions}, Archive for Rational Mechanics and
  Analysis, 107 (1989), pp.~71--83.

\bibitem{LuckhausStur}
{\sc S.~Luckhaus and T.~Sturzenhecker}, {\em Implicit time discretization for
  the mean curvature flow equation}, Calculus of Variations and Partial
  Differential Equations, 3 (1995), pp.~253--271.

\bibitem{modica87}
{\sc L.~Modica}, {\em The gradient theory of phase transitions and the minimal
  interface criterion}, Archive for Rational Mechanics and Analysis, 98 (1987),
  pp.~123--142.

\bibitem{ModicaMortola}
{\sc L.~Modica and S.~Mortola}, {\em Un esempio di {$\Gamma$}-convergenza},
  Bollettino dell'Unione Matematica Italiana B (5), 14 (1977), pp.~285--299.

\bibitem{Morfe2020HomogenizationOT}
{\sc P.~S. Morfe}, {\em Homogenization of the {A}llen–{C}ahn equation with
  periodic mobility}, Calculus of Variations and Partial Differential
  Equations, 61 (2020).

\bibitem{Morfe2020_gamma}
{\sc P.~S. Morfe}, {\em Surface tension and {$\Gamma$}-convergence of {V}an der
  {W}aals--{C}ahn-{H}illiard phase transitions in stationary ergodic media},
  Journal of Statistical Physics, 181 (2020), pp.~2225--2256.

\bibitem{DePhilippisMaggi}
{\sc G.~D. Philippis and F.~Maggi}, {\em Regularity of free boundaries in
  anisotropic capillarity problems and the validity of young’s law}, Archive
  for Rational Mechanics and Analysis, 216 (2014), pp.~473--568.

\bibitem{Sternberg1988}
{\sc P.~Sternberg}, {\em The effect of a singular perturbation on nonconvex
  variational problems}, Archive for Rational Mechanics and Analysis, 101
  (1988), pp.~209--260.

\bibitem{StuvardTonegawa}
{\sc S.~Stuvard and Y.~Tonegawa}, {\em On the existence of canonical
  multi-phase {B}rakke flows}, Advances in Calculus of Variations, 17 (2024),
  pp.~33--78.

\end{thebibliography}

\end{document}